\theoremstyle{definition}
\newtheorem{theorem}{Theorem}[section]
\newtheorem{corollary}[theorem]{Corollary}
\newtheorem{proposition}[theorem]{Proposition}
\newtheorem{definition}[theorem]{Definition}
\newtheorem{remark}{Remark}
\newtheorem{claim}{Claim}
\title{Non-strict plurisubharmonicity of energy on Teichm\"uller space}
\author{Ognjen To\v{s}i\'c}
\newcommand{\Teich}{\ensuremath{\mathcal{T}}}
\begin{document}

\begin{abstract}
    For an irreducible representation $\rho:\pi_1(\Sigma_g)\to\mathrm{GL}(n,\mathbb{C})$ there is an energy functional $\mathrm{E}_\rho:\Teich_g\to\mathbb{R}$, defined on Teichm\"uller space by taking the energy of the associated equivariant harmonic map into the symmetric space $\mathrm{GL}(n,\mathbb{C})/\mathrm{U}(n)$. It follows from a result of Toledo that $\mathrm{E}_\rho$ is plurisubharmonic, i.e. its Levi form is positive semi-definite. We study the kernel of this Levi form, and relate it to the $\mathbb{C}^*$ action on the moduli space of Higgs bundles. We also show that the points in $\Teich_g$ where strict plurisubharmonicity fails (i.e. this kernel is non-zero) are critical points of the Hitchin fibration. When $n\geq 2$ and $g\geq 3$, we show that for a generic choice $(S,\rho)$, the map $\mathrm{E}_\rho$ is strictly plurisubharmonic. We also describe the kernel of the Levi form for $n=1$.
\end{abstract}

    \maketitle

    \section{Introduction}
    Let $\Sigma_g$ be the closed surface of genus $g\geq 2$, and let $\rho:\pi_1(\Sigma_g)\to\mathrm{GL}(n,\mathbb{C})$ be a completely reducible representation. Let $X_n=\mathrm{GL}(n,\mathbb{C})/\mathrm{U}(n)$ be the symmetric space that corresponds to $\mathrm{GL}(n,\mathbb{C})$. For any Riemann surface $S\in\Teich_g$, by theorems of Corlette \cite{corlette} and Donaldson \cite{donaldson}, there exists a $\rho$-equivariant harmonic map $f:S\to X_n$. Using these harmonic maps, we define the energy functional $\mathrm{E}_\rho:\Teich_g\to\mathbb{R}$ that assigns to $S\in\Teich_g$ the energy of the map $f$. 
    \par This energy functional plays an important role in higher Teichm\"uller theory. One example is the Labourie conjecture, stating that when $\rho$ lies in a Hitchin component of a split real semisimple Lie group $G$, there exists a unique minimal $\rho$-equivariant disk in the symmetric space $X$ associated to $G$. Such minimal disks are in fact critical points of $\mathrm{E}_\rho$, and Labourie showed that $\mathrm{E}_\rho$ is proper \cite{Labourie2008}, and that the minimum is the unique critical point in rank $2$ \cite{labourie:hal-01141620}. The energy $\mathrm{E}_\rho$ also plays a role in the recent negative answer to the Labourie conjecture, due to Markovi\'c \cite{Markovic2022} in $\mathrm{PSL}(2,\mathbb{R})^3$, and Sagman--Smillie \cite{Sagman2022UnstableMS} in general, where non-uniqueness follows from the existence of saddle points of $\mathrm{E}_\rho$. 
    \par It was shown by Toledo \cite{Toledo2011HermitianCA} that the map $\mathrm{E}_\rho$ is plurisubharmonic. In this paper, we investigate directions where $\mathrm{E}_\rho$ is not strictly plurisubharmonic, i.e. the kernel of the Levi form of $\mathrm{E}_\rho$, that we denote $K_\rho$. Note that $K_\rho$ when $\rho:\pi_1(\Sigma_g)\to\mathbb{R}$ is a real cohomology class appears in the study of virtual properties of mapping class groups, specifically the Putman--Wieland conjecture, in the work of Markovi\'c and the author \cite{mt-hodge}.
    \par Our first result relates $K_\rho$ to the $\mathbb{C}^*$ action on the moduli space of Higgs bundles, another classical object.  Scaling the Higgs field by $i$ defines an order $4$ automorphism of the space of degree $0$ polystable Higgs bundles. By the non-abelian Hodge correspondence, this automorphism can be transported to the character variety $\mathrm{Rep}(\pi_1(\Sigma_g), \mathrm{GL}(n,\mathbb{C}))$, which is the GIT quotient of the space of representations $\pi_1(\Sigma_g)\to\mathrm{GL}(n,\mathbb{C})$ by the conjugacy action of $\mathrm{GL}(n,\mathbb{C})$. This automorphism depends on the choice of Riemann surface underlying the non-abelian Hodge theorem, so defines a map 
    \begin{align*}
        \mathcal{R}:\mathrm{Rep}(\pi_1(\Sigma_g),\mathrm{GL}(n,\mathbb{C}))\times\Teich_g\to \mathrm{Rep}(\pi_1(\Sigma_g),\mathrm{GL}(n,\mathbb{C})).
    \end{align*}
    We call $\mathcal{R}_\rho(S)=\mathcal{R}(\rho, S)$ the harmonic conjugate of $\rho$ on $S$.
    \begin{theorem}\label{thm:main-higgs}
        Let $\rho:\pi_1(\Sigma_g)\to\mathrm{GL}(n,\mathbb{C})$ be an irreducible representation. Then the space $K_\rho$ of directions in $T\Teich_g$ in which $\mathrm{E}_\rho$ is not strictly plurisubharmonic is exactly the space of directions annihilated by the derivative of $\mathcal{R}_\rho$.
    \end{theorem} 
    When $\rho$ is a Hitchin representation in $\mathrm{SL}(n,\mathbb{R})$, Slegers \cite{Slegers-strict} has shown that $\mathrm{E}_\rho$ is strictly plurisubharmonic. Our next result uses this to show that for a generic representation $\rho$, at a generic point in $\Teich_g$, the energy $\mathrm{E}_\rho$ is strictly plurisubharmonic. When $S$ is a marked Riemann surface and $\rho:\pi_1(\Sigma_g)\to\mathrm{GL}(n,\mathbb{C})$ is a completely reducible representation, denote by $\mathrm{Higgs}(\rho, S)$ the polystable degree $0$ Higgs bundle over $S$ that corresponds to $\rho$ by the non-abelian Hodge theorem.
    \begin{theorem}\label{thm:generic-special}
        Let $\rho:\pi_1(\Sigma_g)\to\mathrm{GL}(n,\mathbb{C})$ be an irreducible representation for $g\geq 3$. For any $S\in\Teich_g$, with the property that $\mathrm{Higgs}(\rho, S)$ lies in a smooth fibre of the Hitchin fibration, $K_\rho(S)=\{0\}$. Conversely, for any $g\geq 4, S\in\Teich_g$, and any $n\geq 2$, there exists a representation $\rho:\pi_1(\Sigma_g)\to\mathrm{GL}(n,\mathbb{C})$ such that $K_\rho(S)\neq \{0\}$.
    \end{theorem}
    \begin{remark} Here by a smooth fibre of the Hitchin fibration, we mean a fibre whose corresponding spectral curve is smooth. We refer the reader to \S\ref{sect:prelim} for the precise definitions.
    \end{remark} 
    \par When $n=1$, we are able to completely classify the kernel of the Levi form of $\mathrm{E}_\rho$ for any $\rho:\pi_1(\Sigma_g)\to\mathbb{C}^*=\mathrm{GL}(1,\mathbb{C})$. Before we state this result, recall that over a Riemann surface $S$, the space of holomorphic quadratic differentials $\mathrm{QD}(S)$ has dimension $3g-3$, and is naturally isomorphic to the cotangent space to $\Teich_g$ at $S$. We denote by $\Omega(S)$ the set of holomorphic 1-forms on $S$.
    \begin{proposition}\label{prop:n=1}
        Let $\rho:\pi_1(\Sigma_g)\to\mathbb{C}^*$ be a representation. Given a marked Riemann surface $S\in\Teich_g$, let $\phi$ be the holomorphic 1-form whose real part represents the cohomology class $-\frac{1}{2}\log\abs{\rho}$. Then $K_\rho(S)$ is the annihilator of the set $\phi\otimes\Omega(S)\leq\mathrm{QD}(S)$. Moreover, the distribution $K_\rho$ is integrable, and the leaves of the resulting foliation are complex submanifolds of $\Teich_g$ of codimension $g$.
    \end{proposition}
    Finally, we relate $K_\rho$ to the critical points of the Hitchin integrable system. Recall that, given a Riemann surface $S$, the moduli space of stable rank $n$ degree $0$ Higgs bundles over $S$, denoted $\mathcal{M}(S)$, is a symplectic manifold equipped with a (singular) fibration $H:\mathcal{M}(S)\to\mathcal{B}(S)$, introduced by Hitchin \cite{hitchin-int-systems}, such that all non-singular fibres of $H$ are Lagrangian submanifolds. We will recall a precise definition in \S\ref{subsubsec:hitchin-fibration}. 
    \begin{definition} 
        The $d$-th critical locus of $H$ is the set of points in $\mathcal{M}(S)$ where the rank of $D H$ is at most $\dim\mathcal{B}(S)-d$. 
    \end{definition}
     These appear in the recent work of Hitchin \cite{hitchin-rank-2}. He shows that in $\mathrm{SL}(2,\mathbb{C})$, the $d$-th critical locus is the space of Higgs fields that have at least $d$ zeros (counting multiplicity). 
    \begin{theorem}\label{thm:hitchin-integrable-system}
        Let $\rho:\pi_1(\Sigma_g)\to\mathrm{GL}(n,\mathbb{C})$ be an irreducible representation. Then if $d=\dim K_\rho(S)$, the point $\mathrm{Higgs}(\rho, S)$ lies in the $d$-th critical locus of the Hitchin integrable system.
    \end{theorem}
    \par All our results are derived from the following more precise analytic criterion. 
    \begin{theorem}\label{thm:higgs-lap-zero}
        Let $\rho:\pi_1(\Sigma_g)\to\mathrm{GL}(n,\mathbb{C})$ be completely reducible, $S$ be a marked Riemann surface of genus $g$. If $(E,\phi)=\mathrm{Higgs}(\rho, S)$, then $K_\rho(S)$ is the space of Beltrami forms $\mu$ on $S$ such that there exists a section $\xi$ of $\mathrm{End}(E)$ with
        \begin{align}\label{eq:higgs-lap-zero}
            \mu\phi=\bar{\partial}\xi\text{ and }[\phi,\xi]=0.
        \end{align}
    \end{theorem}
    We derive Theorem \ref{thm:higgs-lap-zero} in turn from a generalization of the analysis of Toledo \cite{Toledo2011HermitianCA}, described in \S\ref{subsec:results-on-riemannian}.
    \subsection{Results on Riemannian manifolds with very strongly seminegative curvature}\label{subsec:results-on-riemannian}
    Let $M$ be a Riemannian manifold with an isometry group $\mathrm{Isom}(M)$. Let $\rho:\pi_1(\Sigma_g)\to\mathrm{Isom}(M)$ be a representation.
    \begin{definition}
        If $(S_t\in\Teich_g:t\in\mathbb{D})$ is a holomorphic disk in Teichm\"uller space, and if $f_t:\tilde{S}_t\to M$ is a smoothly varying family of harmonic $\rho$-equivariant maps from the universal cover $\tilde{S}_t$ of $S_t$ to $M$, we say that $((S_t, f_t):t\in\mathbb{D})$ form a complex disk of $\rho$-equivariant harmonic maps, based at $S_0$ with direction $\mu=\left.\frac{\partial S_{x+iy}}{\partial x}\right|_{x=y=0}$.
    \end{definition} 
    \begin{definition}
        If $f:\tilde{\Sigma}_g\to M$ is a $\rho$-equivariant smooth map, the pullback bundle $f^*TM$ descends to a bundle on $\Sigma_g$, that is naturally equipped with a connection. We call this bundle over $\Sigma_g$ the equivariant pullback bundle of $f$.
    \end{definition}
    We remind the reader of the notion of very strongly seminegative curvature introduced by Siu \cite{siu}, meaning that the complexification of the curvature operator is negative semi-definite. 
    \begin{theorem}\label{thm:lap-energy-zero}
        Suppose that $M$ has very strongly seminegative curvature, and let $((S_t, f_t):t\in\mathbb{D})$ be a complex disk of equivariant harmonic maps with direction $\mu$. Then $\Delta \mathrm{E}(f_t)(0)=0$ if and only if there exists a section $\xi$ of the complexified equivariant pullback bundle $\mathcal{E}^\mathbb{C}$, such that 
        \begin{gather}\label{eq:lap-energy-zero}
            \mu\partial f_0=\bar{\partial}\xi\text{ and }R^M(\xi, \partial f_0)=0.
        \end{gather}
        Moreover, in this case $\xi-\frac{\partial f}{\partial{t}}$ is a parallel section of $\ker(R^M(-, df_0))\leq \mathcal{E}$.
    \end{theorem}
    Given a complex disk of equivariant harmonic maps $((S_t, f_t):t\in\mathbb{D})$, note that the equivariant pullback bundle $\mathcal{E}$ of $S_0$ has a connection $f_0^*\nabla$ induced from the Levi--Civita connection on $TM$. Then $\left(f_0^*\nabla\right)^{0,1}$ defines a holomorphic structure on $\mathcal{E}^\mathbb{C}:=\mathcal{E}\otimes\mathbb{C}$ by the Koszul--Malgrange theorem. In the next result, we rephrase the computation of Toledo in terms of the Hodge theory of this holomorphic bundle. 
    \begin{theorem} \label{thm:lap-energy}
        Given a complex disk $((S_t, f_t):t\in\mathbb{D})$ of equivariant harmonic maps, let $\mathcal{E}$ be as above. Suppose that $\mu\partial f_0=\bar{\partial}\varphi+\theta$ is the Hodge decomposition of $\mu\partial f_0$, where $\varphi$ is a section of $\mathcal{E}^\mathbb{C}$ and $\theta$ is a $\Delta_{\bar{\partial}}$-harmonic $\mathcal{E}^\mathbb{C}$-valued $(0,1)$-form. Then 
        \begin{align*}
            \Delta \left(\mathrm{E}(f_t)\right)(0)=8\norm{\theta}_{L^2}^2+8\norm{\bar{\partial}(w-\varphi)}_{L^2}^2-8 \int_{\Sigma_g} i\langle R^M(w, \partial f_0)\wedge\bar{\partial}f_0, w\rangle,
        \end{align*}
        where $R^M$ is the Riemann curvature of $M$, $\langle\cdot,\cdot\rangle$ is the metric on $M$, both of which are extended complex linearly, and $w=\left.\frac{\partial f_t}{\partial{t}}\right|_{t=0}\in\Gamma(S, \mathcal{E}^\mathbb{C})$. 
    \end{theorem}
    From Theorem \ref{thm:lap-energy}, it easily follows that whenever $M$ has non-positive Hermitian sectional curvature, the energy $\mathrm{E}(f_t)$ is subharmonic along every complex disk of harmonic maps, which is exactly the result of Toledo \cite{Toledo2011HermitianCA}.
    \subsection{Outline and organization} In \S\ref{sect:prelim}, we recall some preliminaries about Hodge theory on holomorphic vector bundles over Riemann surfaces, and on the curvature tensor and Levi-Civita connection of symmetric spaces. We also fix some notation for the non-abelian Hodge correspondence, define the Hitchin fibration and what we mean by its smooth fibres. The rest of the paper is focused on proving our main results, that can naturally be divided into two groups. 
    \subsubsection{Riemannian manifolds} In \S\ref{sect:levi-form}, we prove Theorems \ref{thm:lap-energy-zero} and \ref{thm:lap-energy}. \par Theorem \ref{thm:lap-energy} is shown by direct computation, closely following the computation of Toledo \cite{Toledo2011HermitianCA}. The main differences are that we give a formula for $\Delta\mathrm{E}$, rather than an inequality, and that we introduce the use of Hodge theory on the complexified pullback bundle $f^*TM\otimes\mathbb{C}$, which simplifies some of the expressions.
        \par Theorem \ref{thm:lap-energy-zero} is derived from Theorem \ref{thm:lap-energy}, one direction being simple: If $\Delta\mathrm{E}=0$, the derivative $f_{{t}}$ of $f$ at $t=0$ is precisely the $\xi$ from Theorem \ref{thm:lap-energy-zero}. The converse direction is a Bochner argument that depends on the assumption of very strongly seminegative curvature. 
    \subsubsection{Higgs bundles} The rest of the paper deals with the specialized situation of completely reducible $\mathrm{GL}(n,\mathbb{C})$ representations. \par To the author's knowledge, a proof that $\mathrm{E}_\rho$ is smooth does not exist in the literature, although this was shown by Slegers \cite{slegers-smooth} to follow from the classical result of Eells--Lemaire \cite{Eells1981DeformationsOM} when $\rho$ is Hitchin. We show in \S\ref{sect:smooth-dep} that when the representation $\rho:\pi_1(\Sigma_g)\to\mathrm{GL}(n,\mathbb{C})$ is completely reducible, the harmonic map can be chosen to depend smoothly on the complex structure $S\in\Teich_g$ and on $\rho$. It immediately follows that the Higgs field and the harmonic metric also depend smoothly on $\rho, S$. This allows us to apply Theorem \ref{thm:lap-energy-zero}, and at the same time shows that $\mathrm{E}_\rho, \mathcal{R}_\rho$ are smooth.
        \par Theorem \ref{thm:higgs-lap-zero} then follows immediately from Theorem \ref{thm:lap-energy-zero}. We then prove Theorem \ref{thm:main-higgs} in \S\ref{sect:pf-main-higgs}. On the one hand, we already have a description of $K_\rho(S)$ in terms of $\mathrm{Higgs}(\rho, S)$ from Theorem \ref{thm:higgs-lap-zero}. The description of $\ker d\mathcal{R}_\rho$ in terms of the Higgs bundle $\mathrm{Higgs}(\rho, S)$ follows from a construction of the moduli space of solutions to the Hitchin equation over a varying Riemann surface, that we carry out in \S\ref{subsec:moduli-of-higgs-bundles}. In this case, both directions of the equivalence require a Bochner argument. In \S\ref{sect:hitchin-fib-proof} we show Theorem \ref{thm:hitchin-integrable-system}.
        \par In \S\ref{sect:n=1}, we analyze the case $n=1$, proving Proposition \ref{prop:n=1}. This is a straightforward corollary of Theorem \ref{thm:higgs-lap-zero}, after showing that $\phi$ from the statement of Proposition \ref{prop:n=1} is the Higgs field associated to $\rho$.
        \par In \S\ref{sect:generic-particular}, we show Theorem \ref{thm:generic-special}. From the general facts about spectral curves, it is easy to show that $K_\rho(S)$ depends only on the image of $\rho$ in the Hitchin fibration associated to $S$, as long as this fibre is smooth. We then construct in each fibre a representation $\rho$ such that $\mathrm{E}_\rho$ is strictly plurisubharmonic at $S$. This relies on the analysis of Slegers \cite{Slegers-strict}, but can also be shown easily from Theorem \ref{thm:higgs-lap-zero}. 
        \par After that, for an arbitrary $S\in\Teich_g$ for $g\geq 4$, we construct explicitly Higgs bundles over $S$ in the nilpotent cone for which the system (\ref{eq:higgs-lap-zero}) has many non-zero solutions $\mu\in T_S\Teich_g$.   
    \subsection*{Acknowledgements}
    I would like to thank Nigel Hitchin for a fruitful conversation on the relationship between equations (\ref{eq:higgs-lap-zero}) and his work in \cite{hitchin-rank-2}, that led to Theorem \ref{thm:hitchin-integrable-system}. I would also like to thank the anonymous referee for numerous comments that have improved the clarity of the paper. 
    \section{Preliminaries}\label{sect:prelim}
    \subsection{Non-abelian Hodge correspondence}\label{sect:prelim-nonabelian-hodge} We fix a marked Riemann surface $S\in\Teich_g$, and let $f:\tilde{S}\to X_n=\mathrm{GL}(n,\mathbb{C})/\mathrm{U}(n)$ be an equivariant harmonic map from its universal cover into the symmetric space associated to $\mathrm{GL}(n,\mathbb{C})$. 
    \par The pullback by $f$ of the principal $\mathrm{U}(n)$-bundle $\mathrm{GL}(n,\mathbb{C})\to X_n$ descends to a principal $\mathrm{U}(n)$-bundle $P_f$ over $S$. We consider the associated bundle $E=P_f\times_{\mathrm{U}(n)}\mathbb{C}^n$ which is a complex vector bundle equipped with a Hermitian metric. The projected Maurer--Cartan form $\omega^{\mathfrak{u}(n)}$ defines a connection $d_A$ on $E$ that preserves the metric. We will recall the definition of the Maurer--Cartan form and of the construction of this connection in \S\ref{subsubsec:maurer-cartan}. 
    \par Since $f$ is harmonic, the triple $(E, d_A^{0,1},\omega^\mathfrak{p}(\partial f))$ forms a Higgs bundle, for which the metric on $E$ is harmonic. 
    \begin{definition}\label{dfn:higgs-bundle}
        A rank $n$ Higgs bundle over a Riemann surface $S$ is a triple $(E, \bar{\partial}, \phi)$ such that $(E,\bar{\partial})$ is a holomorphic rank $n$ vector bundle over $S$, and such that $\phi$ is an $\mathrm{End}(E)$-valued holomorphic 1-form. Such a bundle is stable if all $\phi$-invariant proper subbundles have lower slope (which is degree divided by rank) than $E$. It is polystable if it is a direct sum of stable subbundles of equal slope.
    \end{definition}
    \begin{definition}
        Given a Higgs bundle $(E, \phi)$ over $S$, a Hermitian metric $h$ on $E$ is called harmonic if $\nabla^{h}+\phi+\phi^{*h}$ is flat, where $\nabla^h$ is the Chern connection on $E$.
    \end{definition}
    The non-abelian Hodge correspondence states that the map described in the first paragraph of this subsection can be inverted. 
    Denote by $\mathcal{M}_\mathrm{Higgs}^\mathrm{ps}(S)$ the set of (isomorphism classes of) polystable degree $0$ Higgs bundles over $S$, and by $\mathrm{Rep}(\pi_1(\Sigma_g),\mathrm{GL}(n,\mathbb{C}))$ the space of conjugacy classes of completely reducible representations $\rho:\pi_1(\Sigma_g)\to\mathrm{GL}(n,\mathbb{C})$.
    \begin{theorem}
        Given a Riemann surface $S$, the following map is a bijection 
        \begin{align*}
            \mathrm{Rep}(\pi_1(\Sigma_g),\mathrm{GL}(n,\mathbb{C}))&\longrightarrow \mathcal{M}_\mathrm{Higgs}^\mathrm{ps}(S)\\
            \rho&\longrightarrow \mathrm{Higgs}(\rho, S):=(E, d_A^{0,1}, \omega^\mathfrak{p}(\partial f)),
        \end{align*}
        where $f:\tilde{S}\to X$ is the a $\rho$-equivariant harmonic map.
    \end{theorem} 
    \subsubsection{The Hitchin fibration}\label{subsubsec:hitchin-fibration}
   Given a Riemann surface $S$, let $K_S$ be its cotangent bundle. We denote the Hitchin fibration by 
   \begin{align*}
    H:\mathcal{M}_\mathrm{Higgs}^\mathrm{ps}(S)&\longrightarrow\mathcal{B}(S)=\bigoplus_{i=1}^n H^0(S, K_S^i)\\
    (E,\phi)&\longrightarrow (p_1(\phi), p_2(\phi),...,p_n(\phi)).
   \end{align*}
   Here $p_i(\phi)$ form a basis for the space of $\mathrm{GL}(n,\mathbb{C})$ invariant polynomials on $\mathfrak{gl}(n,\mathbb{C})$, so that $p_i$ is homogeneous of degree $i$. A natural choice for $p_i(\phi)$ is the  $x^{n-i}$ coefficient in the characteristic polynomial $\chi_\phi(x)=\det(x\mathrm{id}_E-\phi)$.
    \par Let $(E,\phi)$ be a Higgs bundle, $a=H(E,\phi)$, and consider the subvariety $S_a$ of the total space of the canonical bundle $K_S$ cut out by the equation 
    \begin{align*}
        \chi_a(x)=x^n+x^{n-1}p_1(\phi)+...+p_n(\phi)=0,
    \end{align*}
    where $\chi_a(x)=\det(x\mathrm{id}_E-\phi)$ is the characteristic polynomial of $\phi$ (that depends only on $a$). This subvariety is called the spectral curve.
    \begin{definition} The point $a\in\bigoplus_{i=1}^n H^0(S, K_S^i)$ defines a smooth fibre $H^{-1}(a)$ of the Hitchin fibration when $S_a$ is reduced, irreducible, and smooth.
    \end{definition}
    When $g\geq 3, n\geq 2$, for a generic $a\in\bigoplus_{i=1}^n H^0(S, K_S^i)$, it is well-known that the curve $S_a$ is smooth. This follows e.g. from \cite[Proposition 2.1]{markman} and the fact that $\deg(K_S^n)\geq 4g-4>2g+1$, and hence $K_S^n$ is very ample.
    \subsection{Hodge theory on holomorphic vector bundles}Let $E$ be a complex hermitian vector bundle over a Riemann surface $S$. If $E$ is equipped with a connection $\nabla$, the $(0,1)$-part of this connection defines the structure of a holomorphic vector bundle on $E$. If $\nabla$ is unitary for the metric on $E$, then it is equal to the Chern connection on $E$. 
    \par We write $\nabla=\partial+\bar{\partial}$ for the splitting of $\nabla$ into its $(1,0)$ and $(0,1)$ parts. If $S$ is equipped with a volume form, that is automatically K\"ahler, we may construct the formal adjoint $\nabla^*$ of $\nabla$. We split $\nabla^*=\partial^*+\bar{\partial}^*$ into its $(1,0)$ and $(0,1)$ parts. We may now construct holomorphic and antiholomorphic Laplacians on $E$-valued differential forms, 
    \begin{gather*}
        \Delta_{\bar{\partial}}=\bar{\partial}^*\bar{\partial}+\bar{\partial}\bar{\partial}^*,\\
        \Delta_{\partial}=\partial^*\partial+\partial\partial^*.
    \end{gather*}
    It is well-known that these are elliptic, and satisfy a Bochner--Kodaira--Nakano identity \cite[Theorem (VII.1.2)]{demailly1997complex}
    \begin{align*}
        \Delta_{\bar{\partial}}=\Delta_\partial+i[F_\nabla, \Lambda_\omega],
    \end{align*}
    where $F_\nabla$ is the curvature of $\nabla$, and $\Lambda_\omega$ is contraction with the K\"ahler form $\omega$. In particular, $\Delta_\partial$ and $\Delta_{\bar{\partial}}$ agree on 1-forms. Standard Hodge theory now shows the following. 
    \begin{proposition}\label{prop:hodge-theory}
        Let $\xi$ be an $E$-valued $(0,1)$ form. Then there exists a section $g$ of $E$, and a closed and coclosed $E$-valued $(0,1)$ form $\theta$, such that $\xi=\bar{\partial}g+\theta$. Moreover, when $(E, \nabla)$ comes from the complexification of a real vector bundle, then $\theta$ is complex conjugate to a holomorphic $E$-valued 1-form.
    \end{proposition}
    \subsection{Symmetric spaces}
    Our aim in this subsection is to recall some general theory on symmetric spaces that will be useful in the sequel. 
    \par The most important result for us is that symmetric spaces of noncompact type have very strongly seminegative curvature, in the sense of Siu \cite{siu}. We first recall the definition of Siu. 
    \begin{definition}A manifold $X$ has very strongly seminegative curvature if the sesquilinear form defined on $\bigwedge^2 TX\otimes\mathbb{C}$ by 
        \begin{align*}
            Q(\alpha\wedge \beta, \gamma\wedge \delta)=\langle R(\alpha,\beta)\bar{\delta},\bar{\gamma}\rangle
        \end{align*}
        is negative semidefinite, where $R$ is the Riemann curvature of $X$.
    \end{definition}
    The following proposition appears in the report of Loustau \cite[Corollary 5.5]{loustau}.
    \begin{proposition}\label{prop:symm-spaces-very-strongly-negative-curv}
        Let $X$ be a locally symmetric space of non-positive curvature. Then $X$ has very strongly seminegative curvature.
    \end{proposition} 
    
     We also recall some theory about the Maurer--Cartan form and how it relates to the tangent bundle of $X$ and its Levi--Civita connection. 
    \subsubsection{Maurer--Cartan form}\label{subsubsec:maurer-cartan} Let $X=G/K$ equipped with some left-invariant metric be a globally symmetric space, where $G$ is a Lie group with a compact subgroup $K$. In this subsection we identify $TX$ with a $G\times_K\mathfrak{p}$, and describe the Levi--Civita connection in terms of this identification. We assume throughout that $X$ has non-positive curvature, and adopt the notation of $\mathfrak{g}, \mathfrak{k}, \mathfrak{p}, \mathfrak{g}^\mathrm{nc}, \mathfrak{k}^\mathrm{nc}, \mathfrak{p}^\mathrm{nc}, \mathfrak{g}^\mathrm{e}, \mathfrak{k}^\mathrm{e}, \mathfrak{p}^\mathrm{e}$ as before.
    \par Let $\omega\in\Omega^1(G, \mathfrak{g})$ be the Maurer--Cartan form of $G$. We will denote by $\omega^{\mathfrak{k}},\omega^\mathfrak{p}$ the results of postcomposing $\omega$ with the projections $\mathfrak{g}\to\mathfrak{k}, \mathfrak{g}\to\mathfrak{p}$, respectively, given by the splitting $\mathfrak{g}=\mathfrak{k}+\mathfrak{p}$. The form $\omega$ has the following invariance properties 
    \begin{enumerate}
        \item $L_g^*\omega=\omega$, and 
        \item $R_g^*\omega=\mathrm{Ad}_{g^{-1}}\omega$.
    \end{enumerate}
    In particular, $\omega$ identifies all tangent spaces of $G$ to $\mathfrak{g}$, in a left-invariant manner.
    \par The form $\omega^\mathfrak{p}$ descends to a form on $X=G/K$, taking values in the bundle $G\times_K \mathfrak{p}$ over $X$, where $K$ acts on $\mathfrak{p}$ by the restriction of the adjoint action of $G$. It defines an isomorphism $TX\to G\times_K\mathfrak{p}$. Similarly, the form $\omega^\mathfrak{k}$ defines a connection on the principal $K$-bundle $G\to X$. There is a connection $\nabla$ on $G\times_K\mathfrak{p}$ induced from $\omega^\mathfrak{k}$. The following result is well-known in the theory of symmetric spaces. Since an explicit reference is unknown to the author, we include a proof for completeness. Note that in a slightly less general setting, the same result was shown by Slegers \cite[Lemma 2.2]{Slegers-strict}.
    \begin{proposition}\label{prop:levi-civita-on-symm-space}
        If $V$ is a vector field on $X$, then $\omega^\mathfrak{p}(\nabla V)=\nabla\left( \omega^\mathfrak{p}(V)\right)$. 
    \end{proposition}
\begin{proof}
    We equip $G$ with a left-invariant metric such that $G\to G/K$ is a Riemannian submersion. We denote by $\nabla$ the Levi-Civita connection on $TG$. 
    \begin{claim}
        Let $V$ be a vector field on $G$. Then $\omega(\nabla V)=d(\omega(V))+\frac{1}{2}[\omega, \omega(V)]$.
    \end{claim}
    \begin{proof}
        Note that by the Koszul formula, if $\xi,\eta$ are left-invariant vector fields on $G$, we have 
        \begin{align*}
            \nabla_\xi \eta=\frac{1}{2}[\xi, \eta].
        \end{align*}
        Since $\eta,\xi$ are left-invariant, $\omega(\xi), \omega(\eta)$ are constant as functions $G\to\mathfrak{g}$. We denote their values by $\xi, \eta$, respectively. 
        We now have  
        \begin{align*}
            \nabla_\xi \langle \omega(V), \eta\rangle =\nabla_\xi \langle V, \eta\rangle&=\langle \nabla_\xi V, \eta\rangle+\left\langle V, \frac{1}{2}[\xi, \eta]\right\rangle.
        \end{align*}
        Thus 
        \begin{align*}
            \langle d_\xi (\omega(V)), \eta\rangle&=\langle \omega(\nabla_\xi V), \eta\rangle+\frac{1}{2}\langle \omega(V), [\xi, \eta] \rangle\\
            &=\langle \omega(\nabla_\xi V), \eta\rangle+\frac{1}{2}\langle [\omega(V), \xi], \eta\rangle.
        \end{align*}
        Since $\eta\in\mathfrak{g}$ is arbitrary, we see that $d_\xi\omega(V)=\omega(\nabla_\xi V)+\frac{1}{2}[\omega(V), \omega(\xi)]$. Thus 
        \begin{align*}
            \omega(\nabla V)=d(\omega(V))-\frac{1}{2}[\omega(V), \omega]
        \end{align*} 
        as desired.
    \end{proof}
    Let $W$ be a vector field on $X$ such that $\omega^\mathfrak{p}(W)=V$. We let $\bar{W}$ be the vector field on $G$ such that $\omega(\bar{W})=V$. Then $\bar{W}$ projects to $W$ under the natural quotient map $G\to X$. Since $\omega(\bar{W}(g))\in\mathfrak{p}$, we see that $\bar{W}(g)\in (L_g)_\star\mathfrak{p}$. In particular, $\bar{W}$ is perpendicular to the fibres of $G\to X$.
    \par Given any section $V'$ of $G\times_K\mathfrak{p}$, introduce vector fields $W', \bar{W}'$ analogously. It follows by standard results on Riemannian submersions \cite[Proposition 13 (4)]{petersen2006riemannian}, that $\nabla_{W'} W$ is the projection of $\nabla_{\bar{W}'}\bar{W}$ to $X$. Thus, using $[\mathfrak{p},\mathfrak{p}]\leq\mathfrak{k}$, 
    \begin{align*}
        \omega^\mathfrak{p}(\nabla_{W'} W)&=\omega^\mathfrak{p}(\nabla_{\bar{W}'} \bar{W})=d_{\bar{W}'} V+\frac{1}{2}[V',V]^\mathfrak{p}=d_{\bar{W}'}V.
    \end{align*}
    The result now follows since $\omega^\mathfrak{k}(\bar{W}')=0$. 
\end{proof}
    \section{Levi form of the energy}\label{sect:levi-form}
    Let $((S_t, f_t):t\in\mathbb{D})$ be a complex disk of harmonic maps for some representation $\rho:\pi_1(\Sigma_g)\to\mathrm{Isom}(M)$. We then have 
    \begin{align*}
        \mathrm{E}(f_t)=i\int_{\Sigma_g} \langle\partial f_t\wedge \bar{\partial}f_t\rangle.
    \end{align*}
    In this section, we prove Theorems \ref{thm:lap-energy-zero} and \ref{thm:lap-energy}.
    \par As mentioned in the outline, deriving Theorem \ref{thm:lap-energy-zero} from Theorem \ref{thm:lap-energy} consists of two steps: showing that if $\Delta\mathrm{E}=0$, then the system (\ref{eq:lap-energy-zero}) has a solution, which follows immediately from Theorem \ref{thm:lap-energy}, and showing the converse, that depends on a Bochner argument that shows that any solution to (\ref{eq:lap-energy-zero}) has to differ from $\frac{\partial f}{\partial t}$ by a parallel section of $\ker R^M(-, df_0)$. This Bochner argument relies on the assumption of very strongly seminegative curvature for $M$, and on second order elliptic equations for $\frac{\partial f}{\partial t}$ obtained in \S\ref{subsec:first-order-var} by taking the first-order variation of the harmonic map equation. Using this argument, we prove Theorem \ref{thm:lap-energy-zero} assuming Theorem \ref{thm:lap-energy} in \S\ref{subsec:pf-thm-lap-energy-zero-assuming-thm-lap-energy}.
    \par In \S\ref{subsec:pf-thm-lap-energy}, we prove Theorem \ref{thm:lap-energy}, following \cite{Toledo2011HermitianCA}. There are two major differences between our computation and that in \cite{Toledo2011HermitianCA}. First, we give an actual equality, rather than an inequality as in \cite{Toledo2011HermitianCA}, and give a slightly more precise analysis of the equality case than \cite{Toledo2011HermitianCA}. Second, by using Hodge theory on the complexified equivariant pullback bundle $f_0^*TM$, we simplify some of the expressions. 
    \subsection{First-order variation of the harmonic map equation}\label{subsec:first-order-var}
    Suppose first that $((f_t, J_t):t\in(-1,1))$ is an open interval of $\rho$-equivariant harmonic maps. We assemble them into a map 
    \begin{align*}
        F:\Sigma_g\times(-1,1)\to M.
    \end{align*}
    Let $f=f_0$. We now consider the vector bundle $(F^*TM, F^*\nabla)$. Let $\Pi_t:f^*TM\to F(-,t)^*TM$ be the parallel transport in this bundle along vertical paths of the form $\{-\}\times [0,t]$ or $\{-\}\times[t,0]$. We let $\nabla^t=\Pi_t^*(F^*\nabla)$. We let $\mathcal{E}$ be the equivariant pullback bundle of $f_0$, and $\mathcal{E}^\mathbb{C}=\mathcal{E}\otimes\mathbb{C}$.
    \begin{proposition}\label{prop:variation-of-harmonic-eq}
        For $\dot{f}=\frac{\partial F}{\partial t}$, we have $\dot{\nabla}=R^M(\dot{f}, df)$, where $R^M$ is the curvature tensor of $M$, and 
        \begin{gather*}
            d^\nabla\left(\partial^\nabla\dot{f}+\mu\partial f-\bar{\mu}\bar{\partial}f\right)+R^M(\dot{f}, \bar{\partial}f)\wedge\partial f=0,\\
            d^\nabla\left(\bar{\partial}^\nabla\dot{f}-\mu\partial f+\bar{\mu}\bar{\partial}f\right)+R^M(\dot{f}, \partial f)\wedge\bar{\partial}f=0.
        \end{gather*}
    \end{proposition}
    \begin{proof}
        If $X$ is a vector field on $\Sigma_g$, and $s$ is a section of $\mathcal{E}$, we have 
        \begin{align*}
            \left.\frac{d}{dt}\right|_{t=0}\nabla^t_X s&=\nabla_{\partial_t}|_{t=0}\nabla_X (\Pi_t s)\\
            &=\nabla_X(\nabla_{\partial_t}\Pi_t s)+R^{F^*TM}(\partial_t, X)s\\
            &=R^M(\dot{f}, f_*X)s,
        \end{align*}
        where $R^{F^*TM}$ denotes the curvature of the bundle $(F^*TM, F^*\nabla)$. 
        \par We now turn to the second claim. We only show the first equation, since the second one is completely analogous. The harmonic map equation is 
        \begin{align*}
            d^{\nabla^t} \left(\left(\Pi_t^{-1}df_t\right)\circ\frac{\mathrm
            {id}-iJ_t}{2}\right)=0.
        \end{align*}
        Since $\Pi_t^{-1}df_t=df+t d^\nabla\dot{f}+O(t^2)$, we have by differentiating and using \cite[Claim 3.2]{mt-hodge}, 
        \begin{align*}
            R^M(\dot{f}, df)\wedge \left(df\circ\frac{\mathrm
            id-iJ}{2}\right)+d^\nabla\left(d^\nabla \dot{f}\circ \frac{\mathrm{id}-iJ}{2}\right)-\frac{i}{2}d^\nabla\left(2i\mu\partial{f}-2i\bar{\mu}\bar{\partial}f\right)=0.
        \end{align*}
        Therefore 
        \begin{align*}
            d^\nabla\left(\partial^\nabla\dot{f}+\mu\partial f-\bar{\mu}\bar{\partial}f\right)+R^M(\dot{f}, \bar{\partial}f)\wedge\partial f=0. 
        \end{align*}
    \end{proof}
    Now note that $\nabla^{0,1}$ makes $\mathcal{E}^\mathbb{C}$ into a holomorphic vector bundle over $(\Sigma_g, J_0)$. Therefore we can write 
    \begin{align*}
        \mu\partial f=\bar{\partial}^\nabla \varphi+\overline{\theta},
    \end{align*} 
    where $\varphi$ is a section of $\mathcal{E}^\mathbb{C}$ and $\theta$ is a holomorphic $\mathcal{E}^\mathbb{C}$-valued 1-form. For brevity of notation, from now on we drop the reference to the connection and write $d,\partial,\bar{\partial}$ for $d^\nabla, \partial^\nabla, \bar{\partial}^\nabla$.
    \begin{corollary}
        In the notation given above, 
        \begin{gather*}
            \bar{\partial}\partial(\dot{f}-2\mathrm{Re}(\varphi))=-R^M(\dot{f}, \bar{\partial}f)\wedge\partial f-f^*R^M\varphi,\\
            \partial\bar{\partial}(\dot{f}-2\mathrm{Re}(\varphi))=-R^M(\dot{f}, \partial f)\wedge\bar{\partial}f-f^*R^M\bar{\varphi}.
        \end{gather*}
    \end{corollary}
    \begin{proof}
        Again we only show the first equation. Note that 
        \begin{align*}
            \bar{\partial}\partial(\dot{f}-\varphi-\bar{\varphi})&=d^\nabla(\partial\dot{f}+\bar{\partial}\varphi-\partial\bar{\varphi})-(\partial\bar{\partial}+\bar{\partial}\partial)\varphi\\
            &=-R^M(\dot{f},\bar{\partial}f)\wedge\partial f-(d^\nabla)^2\varphi, 
        \end{align*}
        which concludes the proof by definition of the curvature tensor. 
    \end{proof}
    \subsection{Proof of Theorem \ref{thm:lap-energy-zero} assuming Theorem \ref{thm:lap-energy}}\label{subsec:pf-thm-lap-energy-zero-assuming-thm-lap-energy}
    By Theorem \ref{thm:lap-energy}, $\Delta \mathrm{E}(f_t)=0$ if and only if for some $\varphi\in \Gamma(\mathcal{E}^\mathbb{C})$, we have 
    \begin{gather}
        \mu\partial f=\bar{\partial}\varphi,\nonumber\\
        \bar{\partial}(W-2\varphi)=0,\nonumber\\
        \langle R^M(W,\partial f)\wedge\bar{\partial}f, \bar{W}\rangle=0,\label{eq:curv-equality}
    \end{gather}
    where $W=2w$. Using elementary linear algebra, we can rewrite (\ref{eq:curv-equality}) as follows.
    \begin{claim}\label{claim:curv-la}
        For any two $X,Y\in TM\otimes\mathbb{C}$, the equality $\langle R^M(X,Y)\bar{Y},\bar{X}\rangle=0$ holds if and only if $R^M(X,Y)=0$. 
    \end{claim}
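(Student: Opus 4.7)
The plan is to invoke the standing assumption that $M$ has very strongly non-positive curvature, i.e.\ the sesquilinear form
\[
Q(\alpha\wedge\beta,\gamma\wedge\delta)=\langle R^M(\alpha,\beta)\bar\delta,\bar\gamma\rangle
\]
on $\bigwedge^{2} TM\otimes\mathbb{C}$ is negative semi-definite, and combine it with Cauchy--Schwarz for semi-definite Hermitian forms. The reverse implication is immediate, so all the content lies in the forward direction.

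First, I would rewrite the hypothesis as $Q(X\wedge Y,X\wedge Y)=0$, which exhibits $X\wedge Y$ as a null vector of the positive semi-definite Hermitian form $-Q$. By the Cauchy--Schwarz inequality $|Q(u,v)|^{2}\leq Q(u,u)\,Q(v,v)$, valid for any semi-definite Hermitian form, every null vector is orthogonal to the entire space. Taking $u=X\wedge Y$ and $v=Z\wedge W$ for arbitrary $Z,W\in TM\otimes\mathbb{C}$ then gives
\begin{align*}
\langle R^M(X,Y)\bar W,\bar Z\rangle=0\qquad\text{for all }Z,W\in TM\otimes\mathbb{C}.
\end{align*}
Since the complex-bilinear extension of the Riemannian metric to $TM\otimes\mathbb{C}$ is non-degenerate, varying $Z$ forces $R^M(X,Y)\bar W=0$ for every $W$, and then varying $W$ yields $R^M(X,Y)=0$ as an endomorphism.

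I expect no serious obstacle. The only small preliminary worth flagging is that $Q$ is in fact a Hermitian sesquilinear form, since otherwise the phrase ``negative semi-definite'' is ambiguous and Cauchy--Schwarz is not available. This is a short calculation using the symmetries $R(X,Y,Z,W)=R(Z,W,X,Y)=-R(X,Y,W,Z)$ of the Riemann tensor together with the identity $\overline{\langle u,v\rangle}=\langle\bar u,\bar v\rangle$ for the complex-linear extension of a real inner product, and should be recorded once before it is used here.
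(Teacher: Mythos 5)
Your proposal is correct and follows essentially the same route as the paper: the paper also defines the sesquilinear form $Q$ on $\bigwedge^2 TM\otimes\mathbb{C}$, invokes negative semi-definiteness from the very strongly non-positive curvature hypothesis, and concludes that a null vector lies in the radical of $Q$, whence $R^M(X,Y)=0$. Your additional remarks (Cauchy--Schwarz for semi-definite Hermitian forms, Hermitian symmetry of $Q$ via the curvature symmetries, and non-degeneracy of the complexified metric) merely spell out steps the paper leaves implicit.
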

    \begin{proof}
        We define the sesquilinear form $Q$ on $\bigwedge^2 TM\otimes\mathbb{C}$ by 
        \begin{align*}
            Q(X\wedge Y, Z\wedge W)=\langle R^M(X, Y)\bar{W},\bar{Z}\rangle. 
        \end{align*}
        This is well-defined by the standard symmetries of the Riemann curvature $R^M$. Since $M$ has very strongly seminegative curvature, by definition $Q$ is negative semi-definite. Thus $Q(X\wedge Y, X\wedge Y)=0$ if and only if $Q(X\wedge Y, -)=0$, which is in turn equivalent to $R^M(X,Y)=0$. 
    \end{proof}
    Therefore (\ref{eq:curv-equality}) is equivalent to 
    \begin{align}\label{eq:curv-equality'}
        R^M(W,\partial f)=0.
    \end{align}
    Hence if $\Delta \mathrm{E}(f_t)=0$, the required solution is $\xi=\frac{1}{2}W$.
    \par Conversely, assume that for a section $\xi$ of $\mathcal{E}^\mathbb{C}$, we have 
    \begin{align*}
        \mu\partial f=\bar{\partial}\xi\text{ and }R^M(\xi,\partial f)=0.
    \end{align*}
    Note that by Proposition \ref{prop:variation-of-harmonic-eq}, we have 
    \begin{gather*}
        d\left(\bar{\partial}\dot{f}^\mu-\mu\partial f+\bar{\mu}\bar{\partial}f\right)+R^M(\dot{f}^\mu,\partial f)\wedge \bar{\partial}f=0,\\
        d\left(\bar{\partial}\left(i\dot{f}^{i\mu}\right)+\mu\partial f+\bar{\mu}\bar{\partial}f\right)+R^M(i\dot{f}^{i\mu},\partial f)\wedge \bar{\partial}f=0.
    \end{gather*}
    Subtracting, and recalling that $W=\dot{f}^\mu-i\dot{f}^{i\mu}$, we have 
    \begin{align*}
        \partial\bar{\partial}(W-2\xi)+R^M(W,\partial f)\wedge\bar{\partial}f=0.
    \end{align*}
    Since $R^M(\xi,\partial f)=0$, we have for $V=W-2\xi$, the equation 
    \begin{align*}
        \partial\bar{\partial}V+R^M(V,\partial f)\wedge\bar{\partial}f=0.
    \end{align*}
    Taking the inner product with $\bar{V}$ and integrating, we have 
    \begin{align*}
        \int_{\Sigma_g} \frac{i}{2}\langle R^M(V,\partial f)\wedge{\bar{\partial}f}, \bar{V}\rangle&=\frac{i}{2}\int_{\Sigma_g} -\langle \bar{V}, \partial\bar{\partial}V\rangle \\
        &=\frac{i}{2}\int_{\Sigma_g} d\langle\bar{V}, \bar{\partial}V\rangle-\langle \bar{V}, \partial\bar{\partial}V\rangle\\
        &=\frac{i}{2}\int_{\Sigma_g}\langle \partial\bar{V}\wedge\bar{\partial}V \rangle=\norm{\bar{\partial}V}_{L^2}^2.
    \end{align*}
    Since $M$ has very strongly seminegative curvature, it also has non-positive Hermitian sectional curvature. Therefore $\frac{i}{2}\langle R^M(V,\partial f)\wedge\bar{\partial}f, \bar{V}\rangle \leq 0$. Therefore we must have 
    \begin{align}\label{eq:v-separated}
        \bar{\partial}V=0\text{ and }\langle R^M(V,\partial f)\wedge\bar{\partial}f, \bar{V}\rangle=0.
    \end{align}
    Thus $\bar{\partial}W=2\bar{\partial}\xi$, and hence $\Delta \mathrm{E}(f_t)=0$. 
    \par We now turn to the final statement. From (\ref{eq:v-separated}) and Claim \ref{claim:curv-la}, it follows that $\bar{\partial}V=0$ and $R^M(V, \partial f)=0$. We aim to show that $\partial V=0$ as well. This follows from a Bochner-type computation 
    \begin{align*}
        \norm{\partial V}_{L^2}^2&=\frac{i}{2}\int_{\Sigma_g}\langle \partial V\wedge\bar{\partial}\bar{V}\rangle=\frac{i}{2}\int_{\Sigma_g} d\langle V, \bar{\partial}\bar{V}\rangle-\langle V, \partial\bar{\partial}\bar{V}\rangle\\
        &=-\frac{i}{2}\int_{\Sigma_g} \langle V, -\bar{\partial}\partial \bar{V}+f^*R^M\bar{V}\rangle=-\frac{i}{2}\int_{\Sigma_g}\langle V, f^*R^M\bar{V}\rangle.
    \end{align*}
    We have, in a local holomorphic coordinate $z$ on $(\Sigma_g, J)$, using the Bianchi identity
    \begin{align*}
        f^*R^M\bar{V}&=R^M(f_z, f_{\bar{z}})\bar{V} dz\wedge d\bar{z}=-(R^M(\bar{V}, f_z)f_{\bar{z}}-R^M(f_{\bar{z}}, \bar{V})f_z)dz\wedge d\bar{z}\\ &=-R^M(\bar{V}, f_z)f_{\bar{z}}dz\wedge d\bar{z}.
    \end{align*}
    Therefore 
    \begin{align*}
        -\frac{i}{2}\langle V, f^*R^M\bar{V}\rangle&=\frac{i}{2} dz\wedge d\bar{z}\langle R^M(\bar{V}, f_z)f_{\bar{z}}, V\rangle\leq 0,
    \end{align*}
    since $M$ has non-positive Hermitian sectional curvature. Thus $\norm{\partial V}_{L^2}^2\leq 0$, so $\partial V=0$ and $R^M(V, {f_{\bar{z}}})=0$ by Claim \ref{claim:curv-la}. Along with $\bar{\partial}V=0$ and $R^M(V, \partial f)=0$, this implies that $dV=0$ and $R^M(V, df)=0$, as desired.
    \subsection{Proof of Theorem \ref{thm:lap-energy}}\label{subsec:pf-thm-lap-energy}
    We first compute some formulas for the second variation of the energy in a direction defined by $\mu$. Thus we are still in the setting where $((J_t, f_t):t\in(-1,1))$ is an interval of equivariant harmonic maps. As in the previous section set $f=f_0$ and $J=J_0$, and equip $\mathcal{E}^\mathbb{C}$ with the holomorphic structure coming from $(f^*\nabla)^{0,1}$, and write $\mu\partial f=\bar{\partial}\varphi+\bar{\theta}$. 
    \par Note that 
    \begin{align*}
        \mathrm{E}(f_t)=i\int_{\Sigma_g} \left\langle df_t\circ \frac{\mathrm{id}-iJ_t}{2}\wedge df_t\circ\frac{\mathrm{id}+iJ_t}{2} \right\rangle=-\frac{1}{2}\int_{\Sigma_g}\langle df_t\wedge df_t\circ J_t\rangle.
    \end{align*}
    We now define $F(s,t)=-\frac{1}{2}\int_{\Sigma_g} \langle df_{s}\wedge df_s\circ J_t\rangle$. Since $f_t$ is harmonic for $J_t$, we have $\frac{\partial F}{\partial s}(t,t)=0$. Therefore 
    \begin{align}
        \left.\frac{d^2}{dt^2}\right|_{t=0}F(t,t)&=\left(\frac{\partial}{\partial s}+\frac{\partial}{\partial t}\right)^2_{t=0} F(t,t)=\left(\frac{\partial}{\partial s}+\frac{\partial}{\partial t}\right) \frac{\partial F}{\partial t}(0,0)\nonumber\\
        &=\frac{\partial^2 F}{\partial t^2}(0,0)+\frac{\partial^2 F}{\partial t\partial s}(0,0).\label{eq:two-terms-energy-lap}
    \end{align}
    We observe that since parallel transport preserves the metric, we have 
    \begin{align*}
        F(s,t)=-\frac{1}{2}\int_{\Sigma_g} \langle \Pi_s^{-1}df_s\wedge \Pi_s^{-1}df_s\circ J_t\rangle. 
    \end{align*}
    From now on, we transport all derivatives $df_s$ to the bundle $\mathcal{E}$ by $\Pi_s$, and work exclusively on $\mathcal{E}$. 
    \par We will compute the two terms in (\ref{eq:two-terms-energy-lap}) separately, but first we recall the first two derivatives of $J$ in the direction $\mu$. This is essentially \cite[Claim 3.2]{mt-hodge}.
    \begin{claim}\label{claim:variation-j}
        We have $\ddot{J}=4\abs{\mu}^2 J$, and for any 1-form $\omega$, 
        \begin{align*}
            \omega\circ\dot{J}=2i\left(\mu\omega^{1,0}-\bar{\mu}\omega^{0,1}\right).
        \end{align*}
    \end{claim}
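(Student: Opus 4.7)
The plan is to reduce everything to a local coordinate calculation. I would fix a holomorphic coordinate $z = x+iy$ for $J = J_0$ near a chosen point, write the Beltrami differential locally as $\mu = \mu(z)\,d\bar z\otimes\partial_z$, and take the interval of complex structures to be the Ahlfors deformation $J_t = J_{t\mu}$. In this coordinate, the $(0,1)$-tangent space of $J_t$ is locally spanned by $\partial_{\bar z} - t\mu\,\partial_z$ and the $(1,0)$-tangent space by $\partial_z - t\bar\mu\,\partial_{\bar z}$; this description is what will drive the whole computation.

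The first step is to obtain an explicit matrix for $J_t$ in the basis $(\partial_z,\partial_{\bar z})$. Writing $J_t\partial_z = A\,\partial_z + B\,\partial_{\bar z}$ and $J_t\partial_{\bar z} = C\,\partial_z + D\,\partial_{\bar z}$, the eigenvalue conditions $J_t(\partial_{\bar z}-t\mu\partial_z) = -i(\partial_{\bar z}-t\mu\partial_z)$ together with its conjugate give a small linear system whose solution is $A = -D = i(1+t^2|\mu|^2)/(1-t^2|\mu|^2)$, $B = -2it\bar\mu/(1-t^2|\mu|^2)$, and $C = 2it\mu/(1-t^2|\mu|^2)$. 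Expanding in powers of $t$ immediately produces the first-order data $\dot J\partial_z = -2i\bar\mu\,\partial_{\bar z}$, $\dot J\partial_{\bar z} = 2i\mu\,\partial_z$, and the second-order data $\ddot J\partial_z = 4i|\mu|^2\,\partial_z = 4|\mu|^2\,J\partial_z$ (and likewise on $\partial_{\bar z}$), which is the operator identity $\ddot J = 4|\mu|^2 J$.

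For the action on a 1-form, I would decompose $\omega = f\,dz + g\,d\bar z$ and evaluate $(\omega\circ\dot J)(\partial_z) = \omega(\dot J\partial_z) = -2i\bar\mu g$ and $(\omega\circ\dot J)(\partial_{\bar z}) = 2i\mu f$. Interpreting $\mu\,\omega^{1,0}$ as the $(0,1)$-form obtained by contracting the $\partial_z$-factor of $\mu$ against $dz$ gives $\mu\,\omega^{1,0} = \mu f\,d\bar z$ and $\bar\mu\,\omega^{0,1} = \bar\mu g\,dz$, so that $2i(\mu\,\omega^{1,0} - \bar\mu\,\omega^{0,1})$ evaluated on $\partial_z$ and $\partial_{\bar z}$ reproduces precisely the values computed for $\omega\circ\dot J$, proving the identity.

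There is no serious obstacle in this argument; the only thing that needs real care is the bookkeeping of factors of $i$ and $2$ and the convention for Beltrami differentials acting as endomorphisms of the tangent space versus operators on differential forms. Since essentially the same identities appear as Claim 3.2 of \cite{mt-hodge}, the cleanest presentation is to quote that statement and give at most a few lines of the local computation above.
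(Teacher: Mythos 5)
Your computation is correct, and it is in fact more than the paper provides: the paper offers no proof of this claim at all, simply asserting that it ``is essentially [Claim 3.2]'' of \cite{mt-hodge}, which is the fallback you yourself suggest in your last paragraph. Your local-coordinate derivation checks out against the paper's conventions. With the deformed $(0,1)$-directions spanned by $\partial_{\bar z}-t\mu\,\partial_z$, the linear system from the eigenvalue conditions does give $A=-D=i(1+t^2|\mu|^2)/(1-t^2|\mu|^2)$, $B=-2it\bar\mu/(1-t^2|\mu|^2)$, $C=2it\mu/(1-t^2|\mu|^2)$, whence $\dot J\partial_z=-2i\bar\mu\,\partial_{\bar z}$, $\dot J\partial_{\bar z}=2i\mu\,\partial_z$, and $\ddot J=4|\mu|^2J$; and your reading of $\mu\,\omega^{1,0}=\mu f\,d\bar z$ is the one the paper uses implicitly (it is what makes the identity $\frac{d}{dt}\partial f=\partial\dot f+\mu\partial f-\bar\mu\bar\partial f$ in the proof of Proposition \ref{prop:variation-of-harmonic-eq} come out right, since there $-\frac{i}{2}\,df\circ\dot J=\mu\partial f-\bar\mu\bar\partial f$). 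The only caveat worth flagging is the usual sign ambiguity in the Beltrami convention: choosing $\partial_{\bar z}+t\mu\,\partial_z$ instead would flip the sign of $\dot J$, so one should state the convention explicitly, as you do. Net effect: your write-up supplies a self-contained verification where the paper relies on an external citation, at the cost of a half page of elementary linear algebra.
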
 
    \subsubsection{First term} We have from Claim \ref{claim:variation-j}, 
    \begin{align}
        \frac{\partial^2 F}{\partial t^2}(0,0)&=-\frac{1}{2}\int_{\Sigma_g} 4\abs{\mu}^2\langle df\wedge df\circ J\rangle=4 i\int_{\Sigma_g} \abs{\mu}^2\langle \partial f\wedge \bar{\partial}f\rangle \nonumber\\
        &=4i\int_{\Sigma_g}\langle\overline{\mu\partial f}\wedge\mu\partial f \rangle=4i\int_{\Sigma_g} \langle  (\partial\bar{\varphi}+\theta)\wedge(\bar{\partial}\varphi+\bar{\theta})\rangle \nonumber\\
        &=4 \norm{\bar{\partial}\varphi}^2+4\norm{\theta}^2.\label{eq:ft-final}
    \end{align}
    Here we introduced the $L^2$ norm for $(1,0)$- or $(0,1)$-forms, by 
    \begin{align*}
        \norm{\phi}^2=i\int_{\Sigma_g} \langle \phi\wedge\bar{\phi}\rangle, 
    \end{align*} 
    for $(1,0)$-forms $\phi$, and $\norm{\bar{\phi}}=\norm{\phi}$. We denote the associated Hermitian inner product by $(\cdot, \cdot)_{L^2}$.
    \subsubsection{Second term} We have
    \begin{align}
        \frac{1}{2}\frac{\partial^2 F}{\partial s\partial t}(0,0)&=-\frac{1}{2}\int_{\Sigma_g} \langle d^\nabla\dot{f}\wedge df\circ \dot{J}\rangle=-i\int_{\Sigma_g} \langle d^\nabla\dot{f}\wedge\left( \mu \partial f-\bar{\mu}\bar{\partial}f\right)\rangle\nonumber\\
        &=-i\int_{\Sigma_g} \langle d^\nabla\dot{f}\wedge (\bar{\partial}\varphi+\bar{\theta}-\partial\bar{\varphi}-\theta)\rangle=-i\int_{\Sigma_g} \langle \partial\dot{f}\wedge\bar{\partial}\varphi\rangle-\overline{\langle \partial\dot{f}\wedge \bar{\partial}\varphi\rangle}\nonumber\\
        &=2\mathrm{Im}\int_{\Sigma_g}\langle \partial\dot{f}\wedge\bar{\partial}\varphi\rangle=-({\partial}{\dot{f}}, {\partial}\bar{\varphi})_{L^2}-({\partial}\bar{\varphi}, {\partial}\dot{f})_{L^2}. \label{eq:st-first}  
    \end{align}
    We now compute this term in a different way, 
    \begin{align}
        \frac{1}{2}\frac{\partial^2 F}{\partial s\partial t}(0,0)&=-\frac{1}{2}\frac{\partial^2 F}{\partial s^2}=-\frac{1}{2}\int_{\Sigma_g} \langle \dot{f}, \mathcal{J}\dot{f}\rangle d\mathrm{area}_{g_0},\label{eq:st-second}
    \end{align}
    where $\mathcal{J}$ is the Jacobi operator for some background conformal metric $g_0$ on $(\Sigma_g, J)$, given by 
    \begin{align*}
        \mathcal{J}V=-\Delta V+ \sum_{i} R^M(\nabla_{e_i} f, V)\nabla_{e_i} f,
    \end{align*} 
    where $e_i$ form a fibrewise orthonormal basis of $f^*TM$. We will review some background on the Jacobi operator in \S\ref{subsec:jacobi-operator}. By Proposition \ref{prop:micaleff-moore}, we have 
    \begin{align*}
        \mathcal{J}V d\mathrm{area}_{g_0}&=-2i\left(\partial\bar{\partial}V+R(V, \partial f)\wedge\bar{\partial}f\right).
    \end{align*}
    \par We now return to the setting of a complex disk of equivariant harmonic maps $((J_z, f_z):z\in\mathbb{D})$, where we have renamed the variable to $z=x+iy$ to avoid confusion. We are interested in $\Delta \mathrm{E}(f_z)=\frac{\partial^2\mathrm{E}}{\partial x^2}+\frac{\partial^2\mathrm{E}}{\partial y^2}$. Write $W=2\frac{\partial f}{\partial z}=\dot{f}^\mu-i\dot{f}^{i\mu}$. We analogously introduce variables $t_1, s_1$ associated to $x$, and $t_2,s_2$ associated to $y$ as above. Then by (\ref{eq:st-first}),
    \begin{align*}
        \frac{1}{2}\left(\frac{\partial^2 F}{\partial s_1\partial t_1}+\frac{\partial^2F}{\partial s_2\partial t_2}\right)&=-(\partial \bar{W}, \partial\bar{\varphi})_{L^2}-(\partial\bar{\varphi}, \partial \bar{W})_{L^2},
    \end{align*}
    and by (\ref{eq:st-second}), 
    \begin{align*}
        \frac{1}{2}\left(\frac{\partial^2 F}{\partial s_1\partial t_1}+\frac{\partial^2F}{\partial s_2\partial t_2}\right)&=-\frac{1}{2}\int_{\Sigma_g}
    \left( \langle \dot{f}^{\mu}, \mathcal{J}\dot{f}^\mu\rangle+\langle \dot{f}^{i\mu}, \mathcal{J}\dot{f}^{i\mu}\rangle\right) d\mathrm{area}_{g_0}\\
        &=-\frac{1}{4}\int_{\Sigma_g}\left(\langle W, \mathcal{J}\bar{W}\rangle+\langle \bar{W}, \mathcal{J}W\rangle\right)d\mathrm{area}_{g_0}\\
        &=-\frac{1}{2}\mathrm{Re}\int_{\Sigma_g} \langle \bar{W}, \mathcal{J}{W}\rangle d\mathrm{area}_{g_0}\\
        &=\mathrm{Re}\left(i\int_{\Sigma_g} \langle \bar{W}, \partial\bar{\partial}{W}\rangle+\langle\bar{W}, R(W, \partial f)\wedge\bar{\partial}f\rangle\right)\\
        &=-\norm{\bar{\partial}W}^2+i\int_{\Sigma_g}\langle R(W, \partial f)\wedge\bar{\partial}f,\bar{W}\rangle.
    \end{align*}
    Therefore using these two equalities, we have 
    \begin{align*}
        \Delta E(0)&=\mathrm{I}+\mathrm{II}, 
    \end{align*}
    where 
    \begin{gather*}
        \mathrm{I}=8\norm{\bar{\partial}\varphi}^2+8\norm{\theta}^2,\\
        \mathrm{II}=-2(\partial \bar{W}, \partial\bar{\varphi})_{L^2}-2,(\partial\bar{\varphi}, \partial\bar{W})_{L^2}=-2\norm{\bar{\partial}W}^2+2\mathcal{R}\\
    \mathcal{R}=i\int_{\Sigma_g}\langle R(W, \partial f)\wedge\bar{\partial}f, \bar{W}\rangle.
   \end{gather*}
   In particular, we have 
   \begin{align*}
    \mathrm{II}&=2\left(-2(\partial\bar{W},\partial\bar{\varphi})_{L^2}-2\left(\partial\bar{\varphi},\partial\bar{W}\right)_{L^2}\right)+2\norm{\bar{\partial}W}^2-2\mathcal{R}\\
    &=2\norm{\bar{\partial}(W-2\varphi)}^2-8\norm{\bar{\partial}\varphi}^2-2\mathcal{R}.
   \end{align*}
   This concludes the proof of Theorem \ref{thm:lap-energy}.
   \section{Smooth dependence on the representation and complex structure}\label{sect:smooth-dep}
   This section is devoted to showing that $\mathrm{E}_\rho$ and $\mathcal{R}_\rho$ are smooth, and having sufficient machinery to be able to compute their derivatives. 
   \par To show smoothness of $\mathrm{E}_\rho$, we show that the $\rho$-equivariant harmonic map $f:(\tilde{\Sigma}_g, J)\to X_n$ can be chosen to depend smoothly on $\rho$ and $J$. We show this by using the Banach manifold implicit function theorem, analogously to the classical result of Eells--Lemaire \cite{Eells1981DeformationsOM}. Their argument essentially already shows that when $f$ is unique, it depends smoothly on $J$. However, since we are interested in showing smooth dependence on the representation as well, we work in the setting of harmonic metrics on flat bundles. Note that here we show smoothness on the representation itself, not on the corresponding element of the character variety. In \S\ref{subsec:dictionary-harmonic-map-harmonic-metric}, we explain how the equation for the harmonic metric on a flat bundle is equivalent to the harmonic map equation, and how its first variation is the Jacobi operator of the associated harmonic map. We will use some standard results on the Jacobi operator in the sequel, so we recall them in \S\ref{subsec:jacobi-operator}. In \S\ref{subsec:main-smoothness-result}, we show that $f$ can be chosen to depend smoothly on $\rho, J$. 
   \par It now follows immediately that $\mathcal{R}$ is smooth, since the Higgs bundle $(E,\phi)$ and the harmonic metric $h$ depend smoothly on $\rho, J$, so does the flat connection associated to $(E,i\phi)$. However, in proving Theorem \ref{thm:main-higgs}, it will be convenient to be able to say that $(\mathcal{R}_\rho)_*\mu=0$ if and only if the associated solution to the Hitchin equation does not change to first order. For this we need to construct the moduli space of solutions to the Hitchin equation over a varying Riemann surface. We only do this in the locus of stable Higgs bundles, since this simplifies the analysis greatly. Constructing the moduli space of polystable solutions is much harder, even over a single Riemann surface, and was carried out by Fan \cite{Fan2020ConstructionOT}. Note that our result likely follows from the original paper of Simpson \cite{Simpson1992HiggsBA}, however it seems worthwhile to include an analytic proof. The proof is essentially a repeat of the original proof of Hitchin \cite{hitchin}, in a modified setup. This construction is in \S\ref{subsec:moduli-of-higgs-bundles}.
   \subsection{Jacobi operator}\label{subsec:jacobi-operator} Here we collect the definition and some properties of the Jacobi operator that we will use in the sequel, mostly without proofs.
   \begin{definition}
       If $f:M\to N$ is a harmonic map between Riemannian manifolds, the Jacobi operator $\mathcal{J}_f$, defined on $f^*TN$, is 
       \begin{align*}
           \mathcal{J}_f(V)=-\Delta V+\sum_{i=1}^n R^N(\nabla_{e_i}f, V)\nabla_{e_i}f, 
       \end{align*}
       where $(e_i:1\leq i\leq n)$ form a fibrewise orthonormal basis of $f^*TN$, $R^N$ is the curavture tensor of $N$, and $\Delta V=\mathrm{tr}\nabla^2 V$ is the Laplacian constructed from the Levi-Civita connections on $f^*TN, T^*M$ and the metric on $M$.
   \end{definition}
   The significance of the Jacobi operator comes from the following well-known fact, the proof of which we will omit.
   \begin{proposition}\label{prop:jacobi-variation-of-energy}
       Let $f:(M,g)\to (N,h)$ be a harmonic map, and let $f_t:M\times(-1, 1)\to N$ be a variation of $f=f_0$. If we let $\dot{f}=\left.\frac{\partial f_t}{\partial t}\right|_{M\times\{0\}}$, we have 
       \begin{align*}
           \frac{d^2}{dt^2} \frac{1}{2}\int_{M} \mathrm{tr}_{g} \left(f^*h\right)d\mathrm{vol}_M&=\int_M \langle \dot{f}, \mathcal{J}_f\dot{f}\rangle d\mathrm{vol}_M.
       \end{align*}
   \end{proposition}
   We will also use the formula of Micaleff--Moore \cite[equation (2.3)]{Micallef1988MinimalTA}, stated below as a proposition.
   \begin{proposition}\label{prop:micaleff-moore}
       If $M$ is a Riemann surface, equipped with a K\"ahler form $\omega$, and $f:M\to N$ is a harmonic map, then 
       \begin{align*}
           \mathcal{J}_f(V)\omega=-2i\left(\partial\bar{\partial}_AV+R(V,\partial f)\wedge\overline{\partial f}\right).
       \end{align*} 
   \end{proposition}
   Finally, we state a result of Sunada \cite[Lemma 3.4]{Sunada1979}.
   \begin{proposition}\label{prop:sunada}
       Let $(M,g)$ be a compact connected Riemannian manifold, $X$ be a non-positively curved symmetric space, and $\rho:\pi_1(M)\to \mathrm{Isom}(X)$ be a representation. Let $f:\tilde{M}\to X$ be a $\rho$-equivariant harmonic map from the universal cover $\tilde{M}$ of $M$. If $s\in\Gamma(f^*TX)$ satisfies $\mathcal{J}_fs=0$, then $f_s(x)=\mathrm{Exp}_{f(x)} s(x)$ is harmonic and $\rho$-equivariant.
   \end{proposition}
   Note that \cite[Lemma 3.4]{Sunada1979} is only stated when the image of $\rho$ acts freely and properly discontinuously on $X$, but the exact same proof shows the equivariant version in Proposition \ref{prop:sunada}.
   \begin{corollary}\label{cor:ker-coker-j}
    Let $S$ be a Riemann surface, $\rho:\pi_1(S)\to\mathrm{GL}(n,\mathbb{C})$ be an irreducible representation, and $f:S\to X_n$ be a harmonic $\rho$-equivariant map. Then $\ker\mathcal{J}_f$ is generated by the constant vector field whose value at $s\in S$ is $\mathrm{id}\cdot f(s)$, where $\mathrm{id}\in\mathrm{End}(\mathbb{C}^n)\cong\mathfrak{gl}(n,\mathbb{C})$.  
   \end{corollary}
   \begin{remark}
    Note that since $\mathcal{J}_f$ is elliptic, it does not matter which function space we are referring to, since by elliptic regularity functions in $\ker\mathcal{J}_f$ are automatically smooth. 
   \end{remark}
   \begin{proof}[Proof of Corollary \ref{cor:ker-coker-j}]
    Since $\rho$ is irreducible, the flat bundle associated to $\rho$ has a unique harmonic metric up to scaling. It follows that the harmonic map $f$ is unique up to global translation by $\lambda\mathrm{id}$ for $\lambda\in\mathbb{R}^*$. Therefore by Proposition \ref{prop:sunada}, if $s$ is in $\ker\mathcal{J}_f$, then it must be a scalar multiple of $\mathrm{id}\cdot f(s)$. Conversely, translating $f$ by $\lambda\mathrm{id}$ gives harmonic maps, so $\mathrm{id}\cdot f$ must be a Jacobi field.
   \end{proof}
   \subsection{Dictionary between the harmonic map and the harmonic metric on a flat bundle}\label{subsec:dictionary-harmonic-map-harmonic-metric}
   In this subsection, we transport the results of Proposition \ref{prop:variation-of-harmonic-eq} and of the previous subsection to equations for the first-order variation of $\mathrm{Higgs}(\rho, J)$, as we vary $J$.
   \par Let $E$ be a vector bundle over $\Sigma_g$. Pick a point $x\in\Sigma_g$ and fix one of its lifts $\tilde{x}\in\tilde{\Sigma}_g$, and a frame $e_1,e_2,...,e_n\in E_x$. Denote by $\mathrm{Hom}(\pi_1(\Sigma_g),\mathrm{GL}(n,\mathbb{C}))$ the space of homomorphisms $\pi_1(\Sigma_g)\to\mathrm{GL}(n,\mathbb{C})$. Note that this is not the same as the chararcter variety $\mathrm{Rep}(\pi_1(\Sigma_g), \mathrm{GL}(n,\mathbb{C}))$, since we consider homomorphisms differing in a conjugacy as distinct.
   \par We define the extended holonomy map 
    \begin{align*}
        \mathrm{EHol}:\left\{(D, h): \begin{matrix} D\text{ flat connection on }E \\ h\text{ Hermitian metric on }E \end{matrix}\right\}\longrightarrow \left\{(\rho, f): \begin{matrix}\rho\in\mathrm{Hom}(\pi_1(\Sigma_g), \mathrm{GL}(n,\mathbb{C})) \\ f:\tilde{\Sigma}_g\to X_n\text{ }\rho\text{-equivariant}\end{matrix}\right\}
    \end{align*}
    as follows. 
   Given a Hermitian metric $h$ and a flat connection $D$ on $E$, let $\rho$ be the holonomy of $D$ with respect to the frame $e_1,e_2,...,e_n$.  Define $f$ as follows: for $\tilde{y}\in\tilde{\Sigma}_g$ lying over $y\in\Sigma_g$, 
   \begin{align*}
       f(\tilde{y})=(h(\hat{e}_i, \hat{e}_j))_{1\leq i,j\leq n},
   \end{align*}
   where $(\hat{e}_i:1\leq i\leq n)$ is the $D$-parallel transport of the frame $(e_i:1\leq i\leq n)$ along the projection to $\Sigma_g$ of a path from $\tilde{x}$ to $\tilde{y}$. Then we set $\mathrm{EHol}(D, h)=(\rho, f)$. It is easy to see that this is a bijection. 
   \par Given a harmonic metric $h$ and a volume form $\omega$ on $\Sigma_g$, we may consider the tension field $\tau(f)\omega$ of the associated $\rho$-equivariant map $f:\tilde{\Sigma}_g\to X_n$. This is a $f^*TX_n$-valued 2-form, so when contracted with the Maurer--Cartan form $\omega^\mathfrak{p}$,  we get a 2-form taking values in the equivariant pullback of $\mathrm{GL}(n,\mathbb{C})\times_{U(n)}\mathfrak{p}$, which is precisely the bundle of self-adjoint endomorphisms $\mathfrak{p}(E)$. We denote this 2-form $\tau(h)$ and call it the tension field of the metric $h$. Note that we have introduced a volume form $\omega$ on $\Sigma_g$ to remove the dependence of the tension field on the background metric on $\Sigma_g$. We similarly contract with $\omega^\mathfrak{p}$ the operator $\mathcal{J}_f\omega$, to get a second order differential operator $\mathcal{J}_h:C^\infty(\mathfrak{p}(E))\to \Omega^2(\mathfrak{p}(E))$.
   \par Given any Hermitian metric on $E$, we can decompose any connection $D=\nabla+\Psi$, where $\nabla$ is a unitary connection on $E$, and $\Psi$ is self-adjoint $\mathrm{End}(E)$-valued 1-form with respect to $h$. Explicitly, 
   \begin{gather*}
   h(s, \Psi t)=-\frac{1}{2}(D h)(s, t),
   \nabla = D-\Psi.
   \end{gather*} 
   Here $\Psi$ will represent the derivative of the associated $\rho$-equivariant map $f$, since $\omega^\mathfrak{p}(df)=-2\Psi$.
   \begin{claim}\label{claim:tension-field-jacobian-higgs}
    When $h$ is a Hermitian metric, we have 
    \begin{align*}
        \tau(h)=4id^\nabla\left(\Psi^{1,0}\right).
    \end{align*}
    Moreover, when $h$ is harmonic, we have 
    \begin{align*}
        \mathcal{J}_h=-2i\left(\partial\bar{\partial}-[[\cdot, \Psi^{1,0}], \Psi^{0,1}]\right).
    \end{align*}
   \end{claim}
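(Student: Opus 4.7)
\emph{Plan.} The strategy is to translate both identities via the projected Maurer--Cartan form $\omega^\mathfrak{p}$, which identifies the equivariant pullback bundle $f^*TX_n$ with $\mathfrak{p}(E)$ and, by Proposition~\ref{prop:levi-civita-on-symm-space}, carries the pullback Levi--Civita connection to the unitary part $\nabla$ of the decomposition $D=\nabla+\Psi$. The given identity $\omega^\mathfrak{p}(df)=-2\Psi$ expresses the derivative of $f$ directly in terms of the Higgs data, while the symmetric-space formula $R^M(X,Y)Z=-[[X,Y],Z]$ (valid on $\mathfrak{p}$ because $[\mathfrak{p},\mathfrak{p}]\subset\mathfrak{k}$ and $[\mathfrak{k},\mathfrak{p}]\subset\mathfrak{p}$) will convert the curvature term of the Jacobi operator into iterated commutators of $\mathrm{End}(E)$-valued forms.

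For the tension field identity, I would start from the standard Riemann-surface expression $\tau(f)\,\omega = -2i\,d^{\nabla^{\mathrm{LC}}}(\partial f)$, where $\nabla^{\mathrm{LC}}$ is the pullback Levi--Civita connection, obtained by decomposing $df=\partial f+\bar\partial f$ in $d^{\nabla^{\mathrm{LC}}}(df\circ J)$ and using $d^{\nabla^{\mathrm{LC}}} df=0$. Applying $\omega^\mathfrak{p}$, commuting it past $d^{\nabla^{\mathrm{LC}}}$ by Proposition~\ref{prop:levi-civita-on-symm-space}, and inserting $\omega^\mathfrak{p}(\partial f)=-2\Psi^{1,0}$, yields $\tau(h)=4i\,d^\nabla(\Psi^{1,0})$.

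For the Jacobi operator identity, assume $h$ is harmonic and begin from Proposition~\ref{prop:micaleff-moore}:
\begin{align*}
\mathcal{J}_f(V)\,\omega \;=\; -2i\bigl(\partial\bar\partial V + R^M(V,\partial f)\wedge\bar\partial f\bigr).
\end{align*}
Set $\xi=\omega^\mathfrak{p}(V)$. By Proposition~\ref{prop:levi-civita-on-symm-space}, the first term transports to $\partial\bar\partial \xi$. For the second, $R^M(X,Y)Z=-[[X,Y],Z]$ together with the pointwise identification of brackets under $\omega^\mathfrak{p}$ gives
\begin{align*}
\omega^\mathfrak{p}\bigl(R^M(V,\partial f)\wedge\bar\partial f\bigr) \;=\; -\bigl[[\xi,\omega^\mathfrak{p}(\partial f)],\omega^\mathfrak{p}(\bar\partial f)\bigr],
\end{align*}
where the outer bracket is interpreted as a commutator wedge product of $\mathrm{End}(E)$-valued forms. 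Substituting $\omega^\mathfrak{p}(\partial f)=-2\Psi^{1,0}$ and $\omega^\mathfrak{p}(\bar\partial f)=-2\Psi^{0,1}$ and simplifying produces the stated formula.

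\emph{Main obstacle.} Neither step is deep; both reduce to direct computation using tools already assembled in the preliminaries. The substantive work is careful bookkeeping of sign and normalization conventions -- the factor $-2$ in $\omega^\mathfrak{p}(df)=-2\Psi$, the $(1,0)$/$(0,1)$ decomposition, and the sign conventions for $R^M$ and the tension field -- so that all factors of $i$ and $2$ line up with the stated identities.
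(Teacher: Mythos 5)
Your proposal is correct and follows essentially the same route as the paper, which likewise deduces the first identity from $\tau(f)\omega=-2i\bar{\partial}\partial f$ together with $\omega^\mathfrak{p}(df)=-2\Psi$, and the second directly from Proposition \ref{prop:micaleff-moore} via the symmetric-space curvature formula. The normalization worry you flag is genuine --- substituting $\omega^\mathfrak{p}(\partial f)=-2\Psi^{1,0}$ and $\omega^\mathfrak{p}(\bar{\partial} f)=-2\Psi^{0,1}$ into $R(V,\partial f)\wedge\bar{\partial} f=-[[V,\partial f],\bar{\partial} f]$ naively yields $-4[[\cdot,\Psi^{1,0}],\Psi^{0,1}]$ rather than the stated $-[[\cdot,\Psi^{1,0}],\Psi^{0,1}]$ --- but the paper's own two-sentence proof elides the same point, so this is a normalization discrepancy in the statement rather than a gap in your argument.
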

   \begin{proof}
    The first equality follows immediately from $\tau(f)\omega=-2i{\bar{\partial}}{\partial} f$ and $\omega^\mathfrak{p}(df)=-2\Psi$. The second is equivalent to Proposition \ref{prop:micaleff-moore}.
   \end{proof}
   \subsubsection{Variation of the Higgs field}
   In this subsection, we give equations for the first-order variation of a Higgs field, as we vary the underlying Riemann surface. For convenience, we assume the smoothness result Theorem \ref{thm:harmonic-mp-depends-smoothly-on-rep-cx-structure}.
   \par We first remind the reader that given a harmonic map $f:\left(\tilde{\Sigma}_g,J\right)\to X_n$, the Higgs bundle consists of the bundle $E^\mathbb{C}$ which is the equivariant pullback of $\mathrm{GL}(n,\mathbb{C})\times_{U(n)}\mathfrak{p}^\mathbb{C}$ by $f$, and of the Higgs field
   \begin{align*}
        \phi=-\frac{1}{2}\omega^\mathfrak{p}(\partial f).
   \end{align*}
   Note that $\omega^\mathfrak{p}(df)$ is by definition self-adjoint, so comparing $(0,1)$-parts of $\omega^\mathfrak{p}(df)$ and $\left(\omega^\mathfrak{p}(df)\right)^*$, we see that 
   \begin{align*}
        \phi^{*h}=-\frac{1}{2}\omega^\mathfrak{p}(\bar{\partial}f).
   \end{align*}
   The harmonic metric on $(E^\mathbb{C},\phi)$ is given by the pullback metric on $E^\mathbb{C}$, and hence the connection of the flat bundle is given by 
   \begin{align*}
    D=\nabla+\phi+\phi^{*h}.
   \end{align*}
   \begin{proposition}\label{prop:variation-higgs}
        Let $\rho:\pi_1(\Sigma_g)\to\mathrm{GL}(n,\mathbb{C})$ be an irreducible representation. Let $S_t$ be a smooth path of Riemann surfaces based at $S_0=S\in\Teich_g$, in the direction of $\mu\in\Omega^{-1,1}(S)$. Then there exists a path $(E, \bar{\partial}_t, \phi_t)$ of Higgs bundles over $S_t$, all with the same harmonic metric $h$, such that $\nabla_{\bar{\partial}_t, h}+\phi_t+\phi_t^{*h}$ has holonomy $\rho$, and 
        \begin{gather*}
            \dot{\nabla}_{\bar{\partial},h}=-[V, \phi+\phi^{*h}],\\
            \dot{\phi}=\partial V+\mu\phi-\bar{\mu}\phi^{*h},
        \end{gather*}
        where $V$ is the solution to the equation 
        \begin{align*}
            d\left(\partial V+\mu\phi-\bar{\mu}\phi^{*h}\right)=[[V, \phi^{*h}], \phi].
        \end{align*}
   \end{proposition}
   \begin{proof}
        This is just Proposition \ref{prop:variation-of-harmonic-eq} in a different guise. By Theorem \ref{thm:harmonic-mp-depends-smoothly-on-rep-cx-structure}, we get a smooth path of equivariant harmonic maps $f_t:\tilde{S}_t\to X_n$. By Proposition \ref{prop:variation-of-harmonic-eq}, we see that after identifying the pullback bundles appropriately, the metric on $f_t^*TX_n$ is constant, and the connection is varying by 
        \begin{align*}
            \dot{\nabla}&=R(\dot{f}, df).
        \end{align*}
        We also have 
        \begin{align}\label{eq:variation-partial-f}
            \frac{d}{dt}\left(\partial f\right)&=\frac{d}{dt}\left(df\circ\frac{\mathrm{id}-iJ}{2}\right)=\partial \dot{f}+\mu\partial f-\bar{\mu}\bar{\partial}f.
        \end{align}
        \par Using the Maurer--Cartan form, we get a smooth path of Higgs bundles with the same harmonic metric $(E, \bar{\partial}_t, \phi_t)$, with the Chern connection $\nabla_t$, such that 
        \begin{align*}
            \dot{\nabla}=-{[\dot{f}, \phi+\phi^{*h}]}.
        \end{align*}
        By applying the Maurer--Cartan form to (\ref{eq:variation-partial-f}), we see that 
        \begin{align*}
            \dot{\phi}&=\partial\dot{f}+\mu\phi-\bar{\mu}\phi^{*h}.
        \end{align*}
        Doing the same thing to the first equation in Proposition \ref{prop:variation-of-harmonic-eq}, we get 
        \begin{align*}
            d\dot{\phi}-[[\dot{f}, \phi^{*h}], \phi]=0.
        \end{align*}
   \end{proof}
   \subsection{Main smoothness result}\label{subsec:main-smoothness-result}
   We now show the main result of this section: the smooth dependence of the harmonic map into $X_n=\mathrm{GL}(n,\mathbb{C})/\mathrm{U}(n)$ on  the complex structure on $\Sigma_g$ and on the representation $\rho:\pi_1(\Sigma_g)\to\mathrm{GL}(n,\mathbb{C})$. We remind the reader that by $\mathrm{Hom}(\pi_1(\Sigma_g), \mathrm{GL}(n,\mathbb{C}))$ we denote the space of representations $\pi_1(\Sigma_g)\to\mathrm{GL}(n,\mathbb{C})$, and that by $\mathrm{Rep}(\pi_1(\Sigma_g), \mathrm{GL}(n,\mathbb{C}))$ we denote the corresponding character variety.
   \begin{theorem}\label{thm:harmonic-mp-depends-smoothly-on-rep-cx-structure}
        Given an irreducible representation $\rho_0:\pi_1(\Sigma_g)\to \mathrm{GL}(n,\mathbb{C})$ and an almost complex structure $J_0$ on $\Sigma_g$, there exists a smooth map 
        \begin{align*}
            f:U\times\tilde{\Sigma}_g\to X_n
        \end{align*}
        where $U$ is a neighbourhood of $(\rho_0, J_0)$ in $\mathrm{Hom}(\pi_1(\Sigma_g),\mathrm{GL}(n,\mathbb{C}))\times\Teich_g$, such that for $(\rho, J)\in U$, the map $f(\rho, J, -)$ is a $\rho$-equivariant $J$-harmonic map $\tilde{\Sigma}_g\to X_n$.
   \end{theorem} 
   \begin{remark}
    Note that by \cite[\S 1.2, Proposition]{goldman}, the space $\mathrm{Hom}(\pi_1(\Sigma_g), \mathrm{GL}(n,\mathbb{C}))$ is non-singular at representations $\rho$ whose image has centralizer equal to the center of $\mathrm{GL}(n, \mathbb{C})$, that is $\mathbb{C}\cdot\mathrm{id}$. In particular, it is smooth in a neighbourhood of any irreducible representation $\rho$.
   \end{remark}
  
   \begin{proof}[Proof of Theorem \ref{thm:harmonic-mp-depends-smoothly-on-rep-cx-structure}] Let $E$ be a complex vector bundle of rank $n$ over $\Sigma_g$. We fix a fibre of $E$ and a frame of that fibre, as in \S\ref{subsec:dictionary-harmonic-map-harmonic-metric}. Let $D_0$ be the flat connection on $E$ with holonomy $\rho_0$, as in the definition of $\mathrm{EHol}$, and let $h_0$ be the corresponding harmonic metric.  \par We abuse notation slightly and denote by $0\in\Teich_g$ the point corresponding to $J_0$. We extend $J_0$ to a family of almost complex structures $J_t$ on $\Sigma_g$, depending real analytically on $t\in\Teich_g$, such that $(\Sigma_g, J_t)$ represents the marked Riemann surface given by $t$.
    \par We let $\mathrm{Met}(E)$ be the cone of Hermitian metrics on $E$, and define 
    \begin{align*}
        \tau:\mathcal{F}\times\Teich_g\times \mathrm{Met}(E)\to \Omega^2(\mathfrak{p}(E)),
    \end{align*}
    where $\mathcal{F}=\{B\in\Omega^1(\mathrm{End}(E)):D_0B+B\wedge B=0\}$ is the space of flat connections $D_0+B$, by 
    \begin{align*}
        \tau(B, t, h)= (D_0+B-\Psi)\left(\Psi\circ\frac{\mathrm{id}-iJ_t}{2}\right),
    \end{align*}
    where 
    \begin{gather*}
        h(s, \Psi t)=-\frac{1}{2}((D_0+B)h)(s, t).
    \end{gather*}
    From Claim \ref{claim:tension-field-jacobian-higgs}, we see that $\tau(B, t, h)=0$ if and only if $\mathrm{EHol}(D_0+B, h)=(\rho, f)$, where $f$ is a $\rho$-equivariant harmonic map. It therefore suffices to construct a smooth $h=h(B, t)$ in a neighbourhood of $(0, 0)\in\mathcal{F}\times\Teich_g$ such that $\tau(B, t, h(B, t))=0$. 
    \par We extend the definition of $\tau$ to the space $\mathrm{Met}(E)_{k+\alpha+2}$ of $C^{k+\alpha+2}$ Hermitian metrics, so that its range is the space $\Omega^2(\mathfrak{p}(E))_{k+\alpha}$ of $C^{k+\alpha}$ forms. These are now Banach manifolds, and we will prove the theorem by appealing to the Banach implicit function theorem.
    \par We first observe that $\tau$ is a smooth map between Banach manifolds, and that \begin{align*}
        \frac{\partial\tau}{\partial h}=\frac{\partial^2 \mathrm{E}}{\partial h^2},
    \end{align*}
    where $\mathrm{E}$ is the energy of the harmonic map associated to $h$. As can be seen from Proposition \ref{prop:jacobi-variation-of-energy}, the derivative $\frac{\partial\tau}{\partial h}$ is the Jacobi operator $\mathcal{J}_{h_0}$ (this is analogous to \cite[Lemma 2.6]{Eells1981DeformationsOM}).
    \par We will apply the Banach manifold implicit function theorem to the following map 
    \begin{align*}
        F:\mathcal{F}\times\Teich_g\times\mathrm{Met}(E)_{k+\alpha+2}\to \Omega^2(\mathfrak{p}(E))_{k+\alpha}
    \end{align*}
    by $F(B, t, h)=\tau(B, t, h)+\log\abs{\det(h_x)}\frac{\mathrm{id}_E}{n}\omega$. Here $\omega$ is a volume form on $\Sigma_g$, and by abuse of notation, we refer to the matrix of the metric $h_x$ with respect to $(e_i:1\leq i\leq n)$ by $h_x$ as well. Note that $F$ is smooth, and that 
    \begin{align}\label{eq:partial-f-partial-h}
        \frac{\partial F}{\partial h}(\dot{h})=\mathcal{J}_{h_0}(\dot{h})+\mathrm{Re}\;\mathrm{tr}\left(h_x^{-1}\dot{h}_x\right)\frac{\mathrm{id}_E}{n}\omega.
    \end{align}Because $\mathcal{J}_{h_0}$ is self-adjoint, its image is the orthogonal complement to its kernel, and hence by Corollary \ref{cor:ker-coker-j} (and Claim \ref{claim:tension-field-jacobian-higgs}), 
    \begin{align}\label{eq:im-jf0}
        \mathrm{im}(\mathcal{J}_{h_0})&=\left\{\alpha\in\Omega^2(\mathfrak{p}(E))_{k+\alpha}: \mathrm{Re}\int_{\Sigma_g} \mathrm{tr}(\alpha)=0\right\}.
    \end{align}
    \par Therefore if $\frac{\partial F}{\partial h}(\dot{h})=0$, integrating and taking the real part of the trace of (\ref{eq:partial-f-partial-h}), we see that $\mathcal{J}_{h_0}(\dot{h})=0$ and $\mathrm{Re}\;\mathrm{tr}(h_x^{-1}\dot{h}_x)=0$. The first condition forces $h^{-1}\dot{h}$ to be a multiple of the identity, and the second to vanish. Thus $\frac{\partial F}{\partial h}$ is injective.\par We now show surjectivity of $\frac{\partial F}{\partial h}$. Let $\theta\in\Omega^2(\mathfrak{p}(E))_{k+\alpha}$ be arbitrary, and split 
    \begin{align*}
        \theta=\left(\mathrm{Re}\;\int_{\Sigma_g}\mathrm{tr}(\theta)\right)\frac{\mathrm{id}_E}{n}\omega+\xi, 
    \end{align*}
    so that $\xi$ satisfies the condition on the right-hand side of (\ref{eq:im-jf0}). Thus $\xi=\mathcal{J}_{h_0}(\dot{h})$ for some $\dot{h}$. We now replace $\dot{h}$ by $\dot{h}+\lambda h$ for some $\lambda\in\mathbb{R}$, so that 
    \begin{align*}
        \mathrm{Re}\;\mathrm{tr}\left(h_x^{-1}\dot{h}_x\right)&=\mathrm{Re}\int_{\Sigma_g}\mathrm{tr}(\theta). 
    \end{align*}
    This does not change $\mathcal{J}_{h_0}(\dot{h})$, and hence $\frac{\partial F}{\partial h}(\dot{h})=\theta$. This concludes the proof that $\frac{\partial F}{\partial h}$ is surjective.
    \par Since $\frac{\partial F}{\partial h}$ is a bijective continuous linear map, by the open mapping theorem it is an isomorphism of Banach spaces. Therefore by the Banach manifold implicit function theorem, we can construct a smooth function $h=h(B, t)$ over $U$ such that $F(B, t, h(B, t))=0$.
    \par By Stokes' theorem, as $\tau(h)=d^\nabla\Psi^{1,0}$, we have 
    \begin{align*}
        \int_{\Sigma_g} \mathrm{tr}\;\tau(B,t,h)=0.
    \end{align*}
    Therefore $F=0$ implies, after taking the trace and integrating, that $\mathrm{Re}\;\mathrm{tr}\left(h_x^{-1}\dot{h}\right)=0$ and that $\tau=0$. Therefore $h=h(\rho, t)$ is a harmonic metric. 
   \end{proof}
   \subsection{Consequences of the main smoothness result}
   \subsubsection{Proof of Theorem \ref{thm:higgs-lap-zero}}
    Let $\rho=\rho_1\oplus\rho_2\oplus...\oplus {\rho_k}$ be the decomposition of $\rho$ into its irreducible components where $\rho_i:\pi_1(\Sigma_g)\to\mathrm{GL}(V_i)$ such that $V_1\oplus V_2\oplus ...\oplus V_k=\mathbb{C}^n$. We fix a marked Riemann surface $S\in\Teich_g$. Then each $\rho_i:\pi_1(\Sigma_g)\to\mathrm{GL}(V_i)$ is irreducible, so applying Theorem \ref{thm:harmonic-mp-depends-smoothly-on-rep-cx-structure}, there exists a neighbourhood $U\subset\Teich_g$ containing $S$, along with smooth maps 
    \begin{align*}
        f_i:U\times\tilde{\Sigma}_g\to X_{n_i},
    \end{align*}
    such that $f_i(J, -)$ is a $\rho_i$-equivariant $J$-harmonic map. We combine these maps into a smooth map 
    \begin{align*}
        f=f_1\times f_2\times ...\times f_k:U\times\tilde{\Sigma}_g\to \prod_{i=1}^k\mathrm{GL}(V_i)/\mathrm{U}(V_i)\subset \mathrm{GL}(n,\mathbb{C})/\mathrm{U}(n)=X_n. 
    \end{align*}
    The map $f$ has the property that for an arbitrary $J$, the map $f(J, -)$ is a $\rho$-equivariant $J$-harmonic map $\tilde{\Sigma}_g\to X_n$. \par Now for any Beltrami form $\mu$ on $S$, we can pick a disk $\iota:\mathbb{D}\to\Teich_g$ in Teichm\"uller space with $\iota(0)=S$ and $\iota_z(0)=\mu$, and consider the complex disk of harmonic maps $(f(\iota(t), -): t\in\mathbb{D})$ based at $S$ with direction $\mu$. The characterization of $K_\rho(S)$ in Theorem \ref{thm:higgs-lap-zero} now follows immediately from Theorem \ref{thm:lap-energy-zero}. 
   \subsubsection{Smoothness of $\mathcal{R}$ on the open set of irreducible representations}\label{subsubsec:smoothness-of-r}
   Note that we will only ever use smoothness of $\mathcal{R}_\rho$ for some fixed irreducible $\rho\in\mathrm{Rep}(\pi_1(\Sigma_g), \mathrm{GL}(n,\mathbb{C}))$, but for completeness here we show smoothness of $\mathcal{R}$ over the set of irreducible representations. We first remind the reader that the character variety $\mathrm{Rep}(\pi_1(\Sigma_g), \mathrm{GL}(n,\mathbb{C}))$ is smooth near any irreducible representation (see e.g. \cite[Theorem 3]{char-varities-background}), so the question of smoothness of $\mathcal{R}$ is well-defined. 
   \par Fix an irreducible representation $\rho_0:\pi_1(\Sigma_g)\to\mathrm{GL}(n,\mathbb{C})$ and a marked Riemann surface $S_0\in\Teich_g$. Let $U\subseteq\mathrm{Hom}(\pi_1(\Sigma_g), \mathrm{GL}(n,\mathbb{C}))\times\Teich_g$ be the open set containing $(\rho_0, S_0)$ from Theorem \ref{thm:harmonic-mp-depends-smoothly-on-rep-cx-structure}. Let $f:U\times\tilde{\Sigma}_g\to X_n$ be the map from Theorem \ref{thm:harmonic-mp-depends-smoothly-on-rep-cx-structure}. Pulling back the tangent bundle of $X_n$ via $f$, we may construct Higgs bundles $(E, \bar{\partial}_{\rho, S}, \phi_{\rho, S})$ each equipped with a harmonic metric $h_{\rho, S}$, as in \S\ref{sect:prelim-nonabelian-hodge}, that vary smoothly with $(\rho, S)\in U$, such that $(E, \bar{\partial}_{\rho, S}, \phi_{\rho, S})=\mathrm{Higgs}(\rho, S)$.
   \par We let $\tilde{\mathcal{R}}(\rho, S)\in \mathrm{Hom}(\pi_1(\Sigma_g), \mathrm{GL}(n,\mathbb{C}))$ be the holonomy of $\nabla^{{\rho, S}}+i\phi_{\rho, S}-i\phi_{\rho, S}^{*h_{\rho, S}}$, where $\nabla^{{\rho, S}}$ is the Chern connection on $(E, \bar{\partial}_{\rho, S}, h_{\rho, S})$. Here we use the identifications from \S\ref{subsec:dictionary-harmonic-map-harmonic-metric} to get a representation, and not just an element of the character variety. Note that $\tilde{\mathcal{R}}(\rho, S)$ represents the Higgs field $i\cdot\mathrm{Higgs}(\rho, S)$. Since the Chern connection depends smoothly on both the Hermitian metric and holomorphic structure on $E$, we see that 
   \begin{align*}
    \tilde{\mathcal{R}}:U\longrightarrow \mathrm{Hom}(\pi_1(\Sigma_g), \mathrm{GL}(n,\mathbb{C}))
   \end{align*}
   is smooth. Let $p:\mathrm{Hom}(\pi_1(\Sigma_g), \mathrm{GL}(n,\mathbb{C}))\to \mathrm{Rep}(\pi_1(\Sigma_g), \mathrm{GL}(n,\mathbb{C}))$ be the natural quotient map. Then in the diagram 
   \[
    \begin{tikzcd}[column sep=small]
      U \arrow{r}{p\circ\tilde{\mathcal{R}}}  \arrow{d}{p\times\mathrm{id}_{\Teich_g}} 
      & \mathrm{Rep}(\pi_1(\Sigma_g), \mathrm{GL}(n,\mathbb{C})) \\
          \mathrm{Rep}(\pi_1(\Sigma_g), \mathrm{GL}(n,\mathbb{C}))\times\Teich_g \arrow{ur}{\mathcal{R}}
    \end{tikzcd}
    \]
    the vertical map $p\times\mathrm{id}_{\Teich_g}$ is a surjective submersion onto a neighbourhood of $(\rho_0, S_0)\in\mathrm{Rep}(\pi_1(\Sigma_g), \mathrm{GL}(n,\mathbb{C}))\times\Teich_g$ and the map $p\circ\tilde{\mathcal{R}}=\mathcal{R}\circ (p\times\mathrm{id}_{\Teich_g})$ is smooth. Therefore $\mathcal{R}$ is smooth in some neighbourhood of $(\rho_0, S_0)$.
   \subsection{Moduli of stable Higgs bundles}\label{subsec:moduli-of-higgs-bundles}
   Here we construct the smooth structure on the moduli space of stable Higgs bundles over a varying Riemann surface. Note that this was carried out by Fan in the general case of polystable Higgs bundles \cite{Fan2020ConstructionOT} over a single Riemann surface. We follows his analysis, though our case is significantly simpler due to the fact that stable Higgs bundles have trivial stabilizer in the relevant gauge groups.
   \subsubsection{Preliminary definitions} We fix a genus $g\geq 2$, and let $J_t$ be an almost complex structure on $\Sigma_g$ that represents the point $t\in\Teich_g$, and such that $J_t$ depends real analytically on $t$. We also fix a smooth complex vector bundle $E\to\Sigma_g$, with a smooth Hermitian metric $h$. We define the group of gauge transformations  
   \begin{gather*}
       \mathcal{G}=\{T\in\Gamma(\Sigma_g, \mathrm{GL}(E)):T\text{ is fibrewise unitary}\}.
   \end{gather*}
   We also denote by $\mathfrak{u}(E)$ the bundle of skew-adjoint endomorphisms of $E$, and let $\mathfrak{p}(E)$ be the bundle of self-adjoint endomorphisms of $E$. Let
   \begin{gather*}
       \mathcal{C}=\{(t, A, \Phi):t\in\Teich_g, \Phi\in\Omega^1(\mathfrak{p}(E))\text{ and }A\text{ is a unitary connection on }E\}, \\
       \mathcal{B}=\left\{(t, A, \Phi)\in\mathcal{C}: \begin{matrix}d_A^{0,1}\Phi^{1,0}=0\text{ with respect to the complex structure }J_t \\ d_A+\Phi\text{ is a flat connection}\end{matrix}\right\}.
   \end{gather*}
   We define $\mathcal{B}^\mathrm{s}$ as the set of triples in $\mathcal{B}$ such that $(E, d_A^{0,1}, \Phi^{1,0})$ is a stable Higgs bundle over $(\Sigma_g, J_t)$. Note that given a stable Higgs bundle $(E, \bar{\partial}^E, \phi)$ over $(\Sigma_g, J_t)$ for which $h$ is a harmonic metric, the corresponding element in $\mathcal{B}^\mathrm{s}$ is given by $(t, \nabla_{\bar{\partial}^E, h}, \phi+\phi^{*h})$, where $\nabla_{\bar{\partial}^E, h}$ is the Chern connection of $h$. The group of gauge transformations $\mathcal{G}$ admits a natural action on $\mathcal{C}$ that preserves $\mathcal{B}$ and $\mathcal{B}^\mathrm{s}$, so we set 
   \begin{align*}
       \mathcal{M}_\mathrm{Hit}^\mathrm{s}=\mathcal{B}^\mathrm{s}/\mathcal{G}.
   \end{align*}
   \subsubsection{Deformation complexes and local slices}
   We are now ready to define local slices for the action of $\mathcal{G}$ on $\mathcal{C}$ near points in $\mathcal{B}^\mathrm{s}$. The proof mirrors that of Hitchin's original paper \cite{hitchin}. 
   \par Define the deformation complex $C_\mathrm{Hit}(t,A,\Phi)$
   \begin{align*}
       \Omega^0(\mathfrak{u}(E))\stackrel{d_1}{\longrightarrow} \Omega^1(\mathfrak{u}(E))\oplus\Omega^1(\mathfrak{p}(E))\stackrel{d_2}{\longrightarrow}\Omega^2(\mathfrak{u}(E))\oplus\Omega^2(\mathrm{End}(E)),
   \end{align*}
   where 
   \begin{gather*}
       d_1\psi=(d_A\psi,[\Phi, \psi]),\\
       d_2(B, \Psi)=(d_A{B}+[\Phi,\Psi], d_A^{0,1}\Psi^{1,0}+[B^{0,1},\Phi]).
   \end{gather*}
   As shown in \cite[pp. 85, 86]{hitchin}, this is an elliptic complex that near a stable Higgs bundle has trivial zeroth and second cohomology. 
   We denote by $\mathcal{C}_k$ (resp. $\mathcal{B}^\mathrm{s}_k$) the Sobolev space $L^2_k$ of unitary connections $A$ and fields $\Phi$ satisfying the same conditions as in the definition of $\mathcal{C}$ (resp. $\mathcal{B}^\mathrm{s}$). We similarly denote by $\mathcal{G}_{k}$ the completion of $\mathcal{G}$ in the $L^2_k$ norm. Then for $k>2$, by standard theory \cite{Mitter:1979un}, 
   \begin{align*}
       \mathcal{C}_k/\mathcal{G}_{k+1}
   \end{align*}
   is a smooth Banach manifold near a point in $\mathcal{B}^\mathrm{s}$. The local slice for this action is precisely $\ker d_1^*$. 
   \par We define 
   \begin{align*}
    \mathrm{T}:\mathcal{C}&\longrightarrow \Omega^2(\mathfrak{u}(E))\oplus \Omega^2(\mathrm{End}(E))\\
    (t, A, \Phi)&\longrightarrow (d_A^2+[\Phi,\Phi], d_A^{0,1}\Phi^{1,0})
   \end{align*}
   so that $\mathcal{B}=\mathrm{T}^{-1}(0)$. Note that $\mathrm{T}$ is real analytic in $t$, and as shown by Hitchin \cite[pp. 86, 87]{hitchin}, 
   \begin{align*}
    D\mathrm{T}_{(t_0, A_0, \Phi_0)}(0, B, \Psi)=d_2(B, \Psi).
   \end{align*}
   In particular, $D\mathrm{T}_{(t_0, A_0, \Phi_0)}$ is surjective. By the Banach manifold implicit function theorem, we see that $\mathcal{M}_\mathrm{Hit}^\mathrm{s}$ is a smooth manifold. 
   \par We also provide a separate description of $T\mathcal{M}_\mathrm{Hit}^\mathrm{s}$ that will be useful in the rest of the paper. Define  
   \begin{gather*}
        \mathcal{H}^s=\left\{(t, \bar{\partial}_A, \Phi): \begin{matrix}t\in\Teich_g, \Phi\in \Omega^1(\mathfrak{p}(E))\\ \bar{\partial}_A\text{ is a }(0,1)\text{-connection on }E\text{ for }J_t\end{matrix},\text{ such that }(E, \bar{\partial}_A, \Phi^{1,0})\text{ is a stable Higgs bundle}\right\},\\
       \mathcal{M}_\mathrm{Higgs}^\mathrm{s}=\frac{\mathcal{H}^\mathrm{s}}{\Gamma(\Sigma_g, \mathrm{GL}(E))}.
   \end{gather*}
   The non-abelian Hodge correspondence provides a homeomorphism 
   \begin{align*}
       \mathcal{M}_\mathrm{Hit}^s\to \mathcal{M}_\mathrm{Higgs}^s,
   \end{align*}
   by taking $(t, A, \Phi)$ to $(t, d_A^{0,1}, \Phi)$. We use this homeomorphism to give $\mathcal{M}_\mathrm{Higgs}^s$ a smooth structure. We define another deformation complex $C_\mathrm{Higgs}(t, \bar{\partial}_A, \Phi)$
   \begin{align*}
       \Omega^0(\mathrm{End}(E))\stackrel{\bar{\partial}_A+\Phi^{1,0}}{\longrightarrow} \Omega^1(\mathrm{End}(E))\stackrel{\bar{\partial}_A+\Phi^{1,0}}{\longrightarrow} \Omega^2(\mathrm{End}(E)).
   \end{align*}
   It is easy to see that the natural isomorphism 
   \begin{align*}
       \Omega^1(\mathfrak{u}(E))\oplus\Omega^1(\mathfrak{p}(E))\to \Omega^1(\mathrm{End}(E))
   \end{align*}
   defines an isomorphism $H^1(C_\mathrm{Hit})\cong H^1(C_\mathrm{Higgs})$ (the reader can also consult \cite[\S 2]{Fan2020ConstructionOT}). This isomorphism shows that the natural map 
   \begin{align}\label{eq:moduli-higgs-hitchin-equiv-tangent}
       \frac{T_{(E, \bar{\partial}_{A_0}, \Phi_0)}\mathcal{H}^s}{(\bar{\partial}_{A_0}+\Phi_0^{1,0})\Omega^0(\mathrm{End}(E))}\to T_{(t_0, \bar{\partial}_{A_0}, \Phi_0)}\mathcal{M}_\mathrm{Higgs}^\mathrm{s}\cong T_{(t_0, {A_0}, \Phi_0)}\mathcal{M}_\mathrm{Hit}^\mathrm{s} 
   \end{align}
   is an isomorphism.
   \section{Proof of Theorem \ref{thm:main-higgs}}\label{sect:pf-main-higgs}
   Here we prove Theorem \ref{thm:main-higgs}, that is $K_\rho(S)=\ker D_S\mathcal{R}_\rho$ for some fixed marked Riemann surface $S$. We have a description of $K_\rho(S)$ from Theorem \ref{thm:higgs-lap-zero}, and hence it only remains to describe $\ker D_S\mathcal{R}_\rho$. This will follow from \S\ref{subsec:moduli-of-higgs-bundles}, in particular from the isomorphism (\ref{eq:moduli-higgs-hitchin-equiv-tangent}).
   \par We first give a preliminary description of $\ker D\mathcal{R}_\rho$ in the following proposition. 
   \begin{proposition}\label{prop:equiv-r-rho-mu=0}
    Let $V(\phi)$ be the solution to the equation from Proposition \ref{prop:variation-higgs}, and set $V=V(\phi)+iV(i\phi)$. Then $(\mathcal{R}_\rho)_*\mu=0$ if and only if there exists a section $A$ of $\mathrm{End}(E)$ such that 
    \begin{gather*}
        [\phi^{*h},V]=\bar{\partial}A, \\
        \partial V-2\bar{\mu}\phi^{*h}=[\phi, A].
    \end{gather*} 
    \end{proposition}
   \begin{proof}
    We consider two first order variations of $(d_A, i\phi)$ in $\mathcal{M}_\mathrm{Hit}^\mathrm{s}$, given by 
    \begin{align*}
        -[V(\phi), \phi^{*h}+\phi]&, i\partial V(\phi)+i\mu\phi-i\bar{\mu}\phi^{*h}, \\
        -[V(i\phi), i\phi-i\phi^{*h}]&, \partial V(i\phi)+i\mu\phi+i\bar{\mu}\phi^{*h}.
    \end{align*}
    By Proposition \ref{prop:variation-higgs}, the first is tangent to the path $i\cdot\mathrm{Higgs}(\rho, J_t)$, and the second one is tangent to $\mathrm{Higgs}(\mathcal{R}_\rho(J_0), J_t)$. Subtracting, we get a first order variation
    \begin{align}\label{eq:variation-diff}
        -[V, \phi^{*h}]-[V^{*h}, \phi], i\left(\partial V-2\bar{\mu}\phi^{*h}\right).
    \end{align}
    \begin{claim}\label{claim:baby-equiv-rrho}
        We have $(\mathcal{R}_\rho)_*\mu=0$ if and only if the variation (\ref{eq:variation-diff}) vanishes as a tangent vector to $\mathcal{M}_\mathrm{Hit}^\mathrm{s}$.
    \end{claim}
    \begin{proof}
    \par Suppose first that $(\mathcal{R}_\rho)_*\mu=0$. Let $J_t$ be a smooth path of almost complex structures on $\Sigma_g$, where we identify $(\Sigma_g, J_0)$ with $S$, tangent to $\mu\in T_S\Teich_g$. Let $\rho_t$ be a path in $\mathrm{Hom}(\pi_1(\Sigma_g), \mathrm{GL}(n, \mathbb{C}))$ that is a lift of $\mathcal{R}_\rho(J_t)$ such that $\left(\frac{d}{dt}\right)_{t=0}\rho_t=0$. 
    By Theorem \ref{thm:harmonic-mp-depends-smoothly-on-rep-cx-structure}, there exists a smooth family of maps $f_{t,s}:\tilde{\Sigma}_g\to X_n$ such that $f_{t,s}$ is a $\rho_t$-equivariant $J_s$-harmonic map. Moreover, since $\left(\frac{d}{dt}\right)_{t=0}\rho_t=0$, we get \[\left(\frac{\partial}{\partial t}\right)_{t=s=0}f=0.\] 
    It follows that the paths $f(t, t)$ and $f(0,t)$ agree to first order. Thus $i\cdot\mathrm{Higgs}(\rho, J_t)=\mathrm{Higgs}(\mathcal{R}_\rho(J_t), J_t)$ and $\mathrm{Higgs}(\mathcal{R}_\rho(J_0), J_t)$ agree to first order (in $\mathcal{M}_\mathrm{Hit}^\mathrm{s}$) at $t=0$. In particular, the variation (\ref{eq:variation-diff}) vanishes. 
    \par Conversely, assume that (\ref{eq:variation-diff}) vanishes as an element of $T\mathcal{M}_\mathrm{Hit}^\mathrm{s}$. Then the paths $\mathrm{Higgs}(\mathcal{R}_\rho(J_t), J_t)=i\cdot\mathrm{Higgs}(\rho, J_t)$ and $\mathrm{Higgs}(\mathcal{R}_\rho(J_0), J_t)$ agree to first order at $t=0$. But the representations $\mathcal{R}_\rho(J_t), \mathcal{R}_\rho(J_0)$ can be recovered as holonomies of $d_{A_t}+\phi_t+\phi_t^{*h}$, and are hence smooth functions on $\mathcal{M}_\mathrm{Hit}^\mathrm{s}$. Thus $\mathcal{R}_\rho(J_t)$ and $\mathcal{R}_\rho(J_0)$ agree to first order at $t=0$. This is exactly equivalent to $(\mathcal{R}_\rho)_*\mu=0$. 
    \end{proof}
    By the isomorphism (\ref{eq:moduli-higgs-hitchin-equiv-tangent}) from \S\ref{subsec:moduli-of-higgs-bundles}, the variation (\ref{eq:variation-diff}) vanishes if and only if 
    \begin{gather*}
        -[V,\phi^{*h}]=\bar{\partial}A,\\
        i\left(\partial V-2\bar{\mu}\phi^{*h}\right)=i[\phi,A],
    \end{gather*} 
    for some section $A$ of $\mathrm{End}(E)$. This concludes the proof of the proposition. 
   \end{proof} 
   \subsection{Only if}\label{subsec:main-higgs-only-if} Suppose that $\mathrm{E}_\rho$ has vanishing Laplacian in the direction $\mu$. Let $(E,\phi)$ be the corresponding Higgs bundle over $S$, and $h$ be the harmonic metric associated to $\rho$. By Theorem \ref{thm:lap-energy-zero}, we see that there exists a section $\xi$ of $\mathrm{End}(E)$, such that 
    \begin{align}\label{eq:pf-thm-higgs-equality}
        \mu\phi=\bar{\partial}\xi\text{ and }[\xi,\phi]=0.
    \end{align}
    From Proposition \ref{prop:variation-higgs}
    \begin{gather*}
        d\left(\partial V(\phi)+\mu\phi-\bar{\mu}\phi^{*h}\right)=[[V(\phi), \phi^{*h}], \phi],\\
        d\left(\partial V(i\phi)+i\mu\phi+i\bar{\mu}\phi^{*h}\right)=[[V(i\phi), \phi^{*h}], \phi].
    \end{gather*}
    Combining these two equations, and setting $V=V(\phi)+iV(i\phi)$, we get
    \begin{align*}
        \bar{\partial}\left({\partial}V-2\bar{\mu}\phi^{*h}\right)=[[V, \phi^{*h}],\phi].
    \end{align*}
    Taking the adjoint with respect to $h$, we get 
    \begin{align*}
        \partial\bar{\partial}\left(V^{*h}-2\xi\right)=-[[\phi, V^{*h}], \phi^{*h}].
    \end{align*}
    Since $[\xi,\phi]=0$, we see that $\partial\bar{\partial}(V^{*h}-2\xi)-[[V^{*h}-2\xi, \phi]\wedge\phi^{*h}]=0$. Note that this is the exact equation we got for $V$ in the proof of Theorem \ref{thm:lap-energy-zero}. From the Bochner argument in the last paragraph of the proof of Theorem \ref{thm:lap-energy-zero} and Claim \ref{claim:curv-la}, we get 
    \begin{align*}
        \bar{\partial}(V^{*h}-2\xi)=0\text{ and }[V^{*h},\phi]=0.
    \end{align*}
    But $[V^{*h},\phi]=0$, so $[V, \phi^{*h}]=0$, and $\bar{\partial}V^{*h}=2\bar{\partial}\xi=2{\mu}\phi$, so $\partial V^{*h}=2\bar{\mu} \phi^{*h}$. Thus $(\mathcal{R}_\rho)_*\mu=0$ by Proposition \ref{prop:equiv-r-rho-mu=0}.
    
   \subsection{If}\label{subsec:main-higgs-if}
   Let $(E, \phi)$ be the stable Higgs bundle that corresponds to $\rho$, with the harmonic metric $h$ and Chern connection $d_A$.
    By Proposition \ref{prop:equiv-r-rho-mu=0}, there exists a section $A\in\Gamma(\mathrm{End}(E))$, with the property that
   \begin{align*}
       \left(-[V, \phi^{*h}], i\left(\partial V-2\bar{\mu}\phi^{*h}\right)\right)=\left(\bar{\partial}A,  i[\phi, A]\right).
   \end{align*}
   Therefore
   \begin{align*}
       \partial\bar{\partial}A&=-[\partial V, \phi^{*h}]=-[[\phi, A]+2\bar{\mu}\phi^{*h}, \phi^{*h}]\\
       &=-[[\phi, A], \phi^{*h}].
   \end{align*}
   Thus $\partial\bar{\partial}A-[[A, \phi], \phi^{*h}]=0$, so by a Bochner argument and Claim \ref{claim:curv-la}, we get $\bar{\partial}A=0$ and $[A,\phi]=0$. Therefore 
   \begin{align*}
       [\phi, V^{*h}]=0\text{ and }\bar{\partial}V^{*h}=2\mu\phi.
   \end{align*} 
   By Theorem \ref{thm:lap-energy-zero}, the Laplacian of $\mathrm{E}_\rho$ in the direction $\mu$ vanishes, so we are done.
   \section{Relationship to the critical points of the Hitchin fibration}\label{sect:hitchin-fib-proof}
   In this section, we show Theorem \ref{thm:hitchin-integrable-system}. This follows easily from Proposition \ref{prop:hamiltonian-field-vanishes}, that we show below.
   \par We first introduce some notation. Given a Riemann surface $S$ and a Beltrami form $\mu$ on $S$, we define the function 
   \begin{align*}
    F_\mu:\mathcal{M}_\mathrm{Higgs}^\mathrm{ps}(S)&\longrightarrow \mathbb{C}\\
    (E, \phi)&\longrightarrow \int_S \mathrm{tr}(\phi\wedge \mu\phi).
   \end{align*}
   Note that $F_\mu$ factors through the Hitchin fibration $H$.
   \begin{proposition}\label{prop:hamiltonian-field-vanishes}
    Let $S$ be a Riemann surface of genus $g$, and $\mu$ be a Beltrami form on $S$. If there exists a section $\xi$ of $\mathrm{End}(E)$ such that 
    \begin{align*}
        \mu\phi=\bar{\partial}\xi\text{ and }[\phi,\xi]=0,
    \end{align*}
    then $(E, \phi)$ is a critical point for $F_\mu$.
   \end{proposition}
   \begin{proof}
    We pick a smooth path $(E, \bar{\partial}_{E, t}, \phi_t)=(E, \bar{\partial}_E+t\dot{A}+O(t^2), \phi+t\dot{\phi}+O(t^2))$ of Higgs bundles starting from $(E, \bar{\partial}_E, \phi)$. Then taking derivatives of $\bar{\partial}_{E,t}\phi_t=0$, we see that 
    \begin{gather}\label{eq:phi-hol-dot}
        \bar{\partial}_E\dot{\phi}+[\dot{A}, \phi]=0.
    \end{gather}
    Taking derivatives of $F_\mu(E, \bar{\partial}_{E,t}, \phi_t)$, we get
    \begin{gather*}
        \dot{F}_\mu=2\int_S \mathrm{tr}(\dot{\phi}\wedge\mu\phi).
    \end{gather*}
    Assume $\mu\phi=\bar{\partial}\xi$ and $[\phi,\xi]=0$ for some $\xi\in\Gamma(\mathrm{End}(E))$. Then we have 
        \begin{align*}
            \dot{F}_\mu&=2\int_S \mathrm{tr}(\dot{\phi}\wedge\bar{\partial}\xi)=2\int_S \mathrm{tr}((\bar{\partial}\dot{\phi})\xi-\bar{\partial}(\dot{\phi}\xi))\\
            &=-2\int_S \mathrm{tr}([\dot{A}, \phi]\xi)=-2\int_S \mathrm{tr}(\dot{A}[\phi, \xi])=0.
        \end{align*}
        Here we used Stokes' theorem and (\ref{eq:phi-hol-dot}) in going from the first to the second line. 
   \end{proof}
   \subsection{Proof of Theorem \ref{thm:hitchin-integrable-system}}
   Suppose now that $d=\dim K_\rho(S)$ and $(E,\phi)=\mathrm{Higgs}(\rho, S)$. Let $\mu_1,...,\mu_d$ be linearly independent Beltrami forms in $K_\rho(S)$. Then from Proposition \ref{prop:hamiltonian-field-vanishes}, we see that $F_{\mu_i}$ all have critical points at $(E,\phi)$. Note that 
   \begin{align*}
    (F_{\mu_1}, F_{\mu_2},...,F_{\mu_d}) = F\circ H,
   \end{align*}
   where $F:\bigoplus_{i=1}^n H^0(S, K_S^i)\to \mathbb{C}^d$ is given by \[ F(\phi_1, \phi_2, ..., \phi_n)=\left(\int_S \mu_i \phi_2: i=1,2,...,d\right). \]
   Note that $\nabla_{(E,\phi)}(F\circ H)=0$, since all $F_{\mu_i}$ have critical points at $(E,\phi)$. Since the $\{\mu_i:i=1,2,...,d\}$ are linearly independent, $F$ is a linear map of full rank, and thus the rank of $\nabla_{(E,\phi)}H$ is at most $\dim\mathcal{B}(S)-d$. 
   \section{The case $n=1$}\label{sect:n=1}
   Here we show Proposition \ref{prop:n=1}. In \S\ref{subsec:higgs-field-rank-1}, we show that $\phi$ from the statement of Proposition \ref{prop:n=1} corresponds to the Higgs field associated to $\rho$. In \S\ref{subsec:proof-prop-n=1}, we derive Proposition \ref{prop:n=1}. Characterization of $K_\rho$ follows from Theorem \ref{thm:higgs-lap-zero}, while integrability of the distribution $K_\rho$ follows from Theorem \ref{thm:main-higgs}.
    \subsection{The Higgs field in rank 1}\label{subsec:higgs-field-rank-1}
    We remark that a rank 1 Higgs bundle over a Riemann surface $S$ is simply a pair $(L,\phi)$ consisting of a line bundle $L$, and a holomorphic 1-form $\phi$ on $S$. This follows from the fact that $\mathrm{End}(L)\cong L\otimes L^{-1}$ is canonically isomorphic to the trivial line bundle $\mathcal{O}_S$. 
    \par Given a rank 1 Higgs bundle $(L, \phi)$, the Hitchin equation is 
    \begin{align*}
        F_\nabla=0,
    \end{align*}
    where $\nabla$ is the Chern connection of a Hermitian metric on $L$. Since $\nabla$ is unitary, its holonomy $\rho_\nabla:\pi_1(\Sigma_g)\to\mathbb{C}^*$ has image in the unitary group $\mathrm{U}(1)=S^1\leq\mathbb{C}^*$.  For $\rho:\pi_1(\Sigma_g)\to\mathbb{C}^*$, the flat connection on $(L,\phi)=\mathrm{Higgs}(\rho, S)$ with holonomy $\rho$ is $\nabla+\phi+\phi^*=\nabla+\phi+\bar{\phi}$, and hence
    \begin{align*}
        \rho(\gamma)=\rho_\nabla(\gamma)e^{-\int_\gamma \phi+\bar{\phi}}=\rho_\nabla(\gamma)e^{-2\int_\gamma\mathrm{Re}(\phi)}.
    \end{align*}
    Therefore $\int_\gamma\mathrm{Re}(\phi)=-\frac{1}{2}\log\abs{\rho(\gamma)}$.
    \subsection{Proof of Proposition \ref{prop:n=1}}\label{subsec:proof-prop-n=1} Given a rank 1 Higgs bundle $(L,\phi)=\mathrm{Higgs}(\rho, S)$, by Theorem \ref{thm:higgs-lap-zero}, the kernel of the Levi form of $\mathrm{E}_\rho$ is precisely the space 
    \begin{align*}
        \{\mu\in T_S\Teich_g:\mu\phi\text{ is }\bar{\partial}\text{-exact}\}.
    \end{align*}
    By Serre duality, this is exactly equal to 
    \begin{align*}
        \left\{\mu\in T_S\Teich_g:\int_S \mu\phi\theta=0\text{ for all }\theta\in H^0(K_S)\right\}=\left(\phi \otimes H^0(K_S)\right)^\perp\leq H^0(K_S^2)^\vee.
    \end{align*}
    Here $H^0(K_S^2)=\mathrm{QD}(S)$ is the space of holomorphic quadratic differentials on $S$, and $H^0(K_S^2)^\vee$ denotes its dual. Since $T_S\Teich_g$ is precisely the dual of $H^0(K_S^2)$, we see that the nullity of the Levi form of $\mathrm{E}_\rho$ is equal to $3g-3-\dim(\phi\otimes H^0(K_S))=2g-3$.
    \par We now show that the distribution $K_\rho$ on $\Teich_g$ is integrable. This is a complex distribution of constant dimension. Moreover by Theorem \ref{thm:main-higgs}, $K_\rho$ consists of all vectors that are annihilated by $\mathcal{R}_\rho$. Therefore if we write locally in some coordinate system on $\mathrm{Rep}(\pi_1(\Sigma_g),\mathbb{C}^*)$, \[\mathcal{R}_\rho=(f_1,f_2,...,f_N)\]
    we see that $K_\rho$ is the vanishing locus of $(df_1,df_2,...,df_N)$. By the Frobenius integrability theorem, since $\dim K_\rho$ is constant, the distribution $K_\rho$ is integrable. Since $K_\rho$ is a complex distribution, the leaves of the resulting foliation are complex submanifolds. 
   \section{The case $n\geq 2$} \label{sect:generic-particular}
   In this section, we show Theorem \ref{thm:generic-special}.
   \par In \S\ref{subsec:reps-smooth-fibre-hitchin}, we show that representations in the smooth fibre of the Hitchin fibration over $S$ have $K_\rho(S)=\{0\}$. As described in the outline, from the BNR correspondence \cite{Beauville1989SpectralCA} and Theorem \ref{thm:higgs-lap-zero}, it follows that $K_\rho(S)$ only depends on which Hitchin fibre $\mathrm{Higgs}(\rho, S)$ belongs to. We then construct strictly plurisubharmonic representations in each fibre. This is done by slightly modifying the $\mathrm{SL}(n,\mathbb{R})$ Hitchin section. Note that in the $\mathrm{SL}(n,\mathbb{R})$ Hitchin section, the energy $\mathrm{E}_\rho$ is strictly plurisubharmonic by \cite{Slegers-strict}, and our result is a slight extension of \cite{Slegers-strict}. 
   \par In \S\ref{subsec:examples}, we construct explicitly stable $\mathrm{SL}(n,\mathbb{C})$-Higgs bundles $(E,\phi)$ in the nilpotent cone over an arbitrary $S\in\Teich_g$, for any $g\geq 4, n\geq 2$, such that 
   \begin{align*}
    \dim\{\mu\in T_S\Teich_g: \text{(\ref{eq:higgs-lap-zero}) has a solution}\}\geq g-3.
   \end{align*}
   By applying the inverse of the non-abelian Hodge correspondence and Theorem \ref{thm:higgs-lap-zero}, this gives representations $\rho:\pi_1(\Sigma_g)\to\mathrm{GL}(n,\mathbb{C})$ such that $\dim K_\rho(S)\geq g-3$.
   \subsection{Representations in the smooth fibre of the Hitchin fibration}\label{subsec:reps-smooth-fibre-hitchin} Let $(E,\phi)=\mathrm{Higgs}(\rho, S)$, and $S_a$ be the spectral curve over $S$ associated to $a=H(E,\phi)$. Note that $S_a$ only depends on the fibre $a\in\bigoplus_{i=1}^nH^0(K_S^i)$, and that it is smooth for a generic $a$, as explained in \S\ref{sect:prelim}. 
    \par From the BNR correspondence \cite[Proposition 3.6]{Beauville1989SpectralCA}, the bundle $E$ can be written as the pushforward along $p$ of a line bundle $L$ on $S_a$. Moreover $S_a$ admits a holomorphic 1-form $\Phi_a$, such that $\phi$ is precisely the pushforward of $\Phi_a$ along $p$. Finally, $\Phi_a$ also only depends on $a$. 
    \par At a point $x\in S_a$ that is not a preimage of a branch point, the characteristic polynomial at $p(x)$ does not have repeated roots. Thus, if $[\xi,\phi]=0$, then $\xi$ is the pushforward of a function on $S_a$, in a neighbourhood of any point not lying over a branch point. By continuity of $\xi$, there is a global function $g$ on $S_a$ such that $\xi=p_*g$. 
    \par From Theorem \ref{thm:higgs-lap-zero}, it now follows that $\mathrm{E}_\rho$ is not strictly plurisubharmonic at $S$ in a direction $\mu$ if and only if $\mu\Phi_a$ is $\bar{\partial}$-exact. However this condition only depends on $a$, so it suffices to construct a Higgs bundle $\mathrm{Higgs}(\rho,S)=(E,\phi)\in H^{-1}(a)$ such that $\mathrm{E}_\rho$ is strictly plurisubharmonic at $S$. The rest of this subsection is devoted to this construction.
    \par Let $s:\bigoplus_{i=2}^n H^0(S, K_S^i)\to\mathcal{M}_\mathrm{Higgs}(S)$ be the Hitchin section. We extend this section to a function $\hat{s}:\bigoplus_{i=1}^n H^0(S, K_S^i)\to\mathcal{M}_\mathrm{Higgs}(S)$ by 
    \begin{align*}
        \hat{s}(q_1,q_2,...,q_n)=q_1\cdot\mathrm{id}_E+\phi,
    \end{align*} 
    where $(E,\phi)=s(q_2,q_3...,q_n)$. Note that the characteristic polynomial of $\hat{s}$ is 
    \begin{align*}
        \chi_{\hat{s}}(x)=\det(x\mathrm{id}_E-\hat{s})=\det((x-q_1)\mathrm{id}_E-s)=\chi_s(x-q_1),
    \end{align*} 
    Since the coefficients of $\chi_s$ run over all possible elements of $\bigoplus_{i=2}^n H^0(S, K_S^i)$, the coefficients of $\chi_{\hat{s}}$ run over all elements of $\bigoplus_{i=1}^n H^0(S, K_S^i)$. The proof is then concluded once the following claim is shown.
    \begin{claim}
        Let $(E,\phi)$ be in the image of $\hat{s}$. If $\mu\in \Omega^{-1,1}(S)$ is a Beltrami form such that $\mu\phi=\bar{\partial}\xi$ for some $\xi\in\mathrm{End}(E)$, then $\mu$ represents the zero direction in $T_S\Teich_g$.
    \end{claim}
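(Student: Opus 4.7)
Write $\phi = q_1\cdot\mathrm{id}_E + \phi_0$ where $(E,\phi_0) = s(q_2,\ldots,q_n)$. The argument is essentially an unpacking of the companion-matrix structure of the Hitchin section. Recall that $E = \bigoplus_{j=1}^n E_j$ with $E_j := K_S^{(n+1)/2 - j}$, and that $\phi_0$, viewed as an $\mathrm{End}(E)$-valued $(1,0)$-form, has its $(j+1,j)$-component equal to the canonical identification $E_j \xrightarrow{\sim} E_{j+1}\otimes K_S$ (since $E_{j+1}\otimes K_S = K_S^{(n-1)/2 - j+1} = E_j$); call this entry $1$. Adding the scalar Higgs field $q_1\cdot\mathrm{id}_E$ modifies only the diagonal, so $\phi_{j+1,j} = 1$ as well, for each $j=1,\ldots,n-1$.

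Now assume $\mu\phi = \bar{\partial}\xi$ for some $\xi\in\Gamma(\mathrm{End}(E))$, and decompose $\xi$ into blocks $\xi_{k,\ell}\in\Gamma(\mathrm{Hom}(E_\ell, E_k))$. Reading off the $(j+1,j)$-entry of the equation yields
\begin{equation*}
\mu \;=\; \mu\cdot\phi_{j+1,j} \;=\; \bar{\partial}\xi_{j+1,j},
\end{equation*}
where $\xi_{j+1,j}\in\Gamma(\mathrm{Hom}(E_j,E_{j+1})) = \Gamma(K_S^{-1})$ is a smooth section of the holomorphic tangent bundle. Hence $\mu$ is $\bar{\partial}$-exact as a $K_S^{-1}$-valued $(0,1)$-form.

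Under the Dolbeault identification $T_S\Teich_g \cong H^{0,1}(S, K_S^{-1})$ (equivalently $H^0(K_S^2)^\vee$ via Serre duality), a Beltrami form represents the zero direction in $T_S\Teich_g$ precisely when it is $\bar{\partial}$-exact in this sense. Therefore $\mu$ represents the zero direction, as claimed.

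I expect no real difficulty: the entire content is the observation that in the Hitchin section the Higgs field has nowhere-vanishing constant subdiagonal entries, and the perturbation by $q_1\cdot\mathrm{id}_E$ preserves this off-diagonal structure. In fact the argument does not use the second condition $[\phi,\xi]=0$ from Theorem~\ref{thm:higgs-lap-zero}, so it yields a slightly stronger conclusion than bare strict plurisubharmonicity at $S$.
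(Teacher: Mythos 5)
Your proof is correct, but it takes a different route from the paper. The paper's own proof is a two-line reduction: it subtracts $\mathrm{tr}(\phi)\,\mathrm{id}_E/n$ and $\mathrm{tr}(\xi)\,\mathrm{id}_E/n$ from both sides to land in the traceless (genuine $\mathrm{SL}(n,\mathbb{R})$ Hitchin section) setting, and then cites Slegers for the conclusion that $\mu$ is a trivial deformation. You instead unpack the companion-matrix normal form of the Hitchin section and read off a single subdiagonal block of $\mu\phi=\bar{\partial}\xi$: since $E_{j+1}\otimes K_S\cong E_j$ canonically and the $(j+1,j)$ entry of $\phi_0$ is a nowhere-vanishing constant section of the trivial bundle (unaffected by adding $q_1\cdot\mathrm{id}_E$, which is purely diagonal, and by the $q_i$ entries above the diagonal), the block equation gives $\mu=\bar{\partial}\xi_{j+1,j}$ with $\xi_{j+1,j}\in\Gamma(K_S^{-1})$ a smooth vector field, so $[\mu]=0$ in $H^{0,1}(S,K_S^{-1})\cong T_S\Teich_g$. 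This is a valid and self-contained argument — indeed it is essentially the content of the Slegers proof that the paper black-boxes — and it has the small advantage of making the traceless reduction unnecessary, since the perturbation by $q_1$ visibly does not touch the subdiagonal. Two minor points you should make explicit: the block-by-block reading of $\bar{\partial}\xi$ is legitimate because $E=\bigoplus_j E_j$ is a \emph{holomorphic} direct sum, so $\bar{\partial}$ preserves the decomposition $\mathrm{End}(E)=\bigoplus_{k,\ell}\mathrm{Hom}(E_\ell,E_k)$; and the argument depends on the chosen normal form of the Hitchin section having nonvanishing constant subdiagonal entries (true for the standard companion-type form, for even $n$ after fixing a theta characteristic $K_S^{1/2}$; any nonzero constants would do equally well).
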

    \begin{proof}
        Taking the traceless part of the equation $\mu\phi=\bar{\partial}\xi$, we see that 
        \begin{align*}
            \mu\left(\phi-\mathrm{tr}(\phi)\frac{\mathrm{id}_E}{n}\right)=\bar{\partial}\left(\xi-\mathrm{tr}(\xi)\frac{\mathrm{id}_E}{n}\right).
        \end{align*}
        But $\phi-\mathrm{tr}(\phi)\frac{\mathrm{id}_E}{n}$ is in the Hitchin section by construction. In this setting, Slegers \cite[Proof of Theorem 1.1, pp. 9]{Slegers-strict} has shown that $\mu$ represents the zero direction in $T_S\Teich_g$.
    \end{proof}
   \subsection{Examples of non-strictly plurisubharmonic representations}\label{subsec:examples}
   We assume in this section that $g\geq 4$, and fix an arbitrary Riemann surface $S\in\Teich_g$. We will construct for all $n\geq 2$ a representation $\rho:\pi_1(\Sigma_g)\to\mathrm{SL}(n,\mathbb{C})$ in the nilpotent cone, such that $\mathrm{E}_\rho$ is not strictly plurisubharmonic at $S$.
   \par We first show the construction when $n$ is odd, and then explain how to modify it in the case of $n$ even.
   \subsubsection{The odd case} We assume first that $n$ is odd. Let $p\in S$ be a generic point, and set $L=\mathcal{O}(p)$. By the geometric Riemann--Roch theorem \cite[pp. 12]{arbarello2013geometry}, $h^0(KL^{-1})=i(p)=g-2$. Here we denote by $h^i$ the dimension of the $i$-th cohomology group $H^i$, and $i(D)$ is the index of specialty of the effective divisor $D$. Therefore we fix a non-zero element $\psi\in H^0(KL^{-1})$. 
   \par We consider the following sequence 
   \begin{align}\label{eq:line-bundles}
    L_1=L^{\lfloor \frac{n-1}{2}\rfloor}, L_2=L^{\lfloor \frac{n-1}{2}\rfloor - 1},..., L_n=L^{-\lfloor \frac{n-1}{2}\rfloor},
   \end{align}
   so that there are in total $n$ line bundles. Let $E=L_1\oplus L_2\oplus...\oplus L_n$, and $\phi\in H^0(K\mathrm{End}(E))$ be the nilpotent Higgs field that has 
   \begin{align*}
    \phi:L_i\to L_{i+1}K\text{ is multiplication by }\psi,
   \end{align*} 
   and $\phi(L_n)=0$.
   \begin{claim}\label{claim:stable}
    The pair $(E,\phi)$ is a stable $\mathrm{SL}(n,\mathbb{C})$-Higgs bundle. 
   \end{claim}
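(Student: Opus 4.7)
The plan has two parts: verifying the $\mathrm{SL}(n,\mathbb{C})$-Higgs bundle structure, and then verifying stability.

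For the $\mathrm{SL}(n,\mathbb{C})$-structure, first I would check the determinant and trace conditions. Since $n$ is odd and $L_i = L^{(n+1)/2 - i}$, the exponents $\{(n+1)/2 - i : 1 \le i \le n\}$ are symmetric about zero, so $\det E = L^{0} = \mathcal{O}$. The Higgs field is strictly lower triangular with respect to the splitting $E = \bigoplus L_i$, hence $\operatorname{tr}\phi = 0$. These are both immediate.

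For stability, the plan is to exploit the $\phi$-invariant filtration $F^{k} = L_k \oplus L_{k+1} \oplus \cdots \oplus L_n$. Since $\phi(L_i) \subset L_{i+1}K$, we have $\phi(F^k) \subset F^{k+1} \otimes K$, so each $F^k$ is $\phi$-invariant. Now let $F \subset E$ be any $\phi$-invariant subbundle of rank $r$ with $1 \le r \le n-1$. Set $F_k = F \cap F^k$ and let $N_k$ be the image of $F_k$ under the projection $F^k \twoheadrightarrow L_k$. Each $N_k$ is a torsion-free subsheaf of $L_k$, so inductively each $F_k$ is locally free and $\deg F = \sum_{k=1}^n \deg N_k$, with $\deg N_k \le \deg L_k$ whenever $N_k \neq 0$.

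The key step, and the one I expect to be the main content, is showing that the index set $I = \{k : N_k \neq 0\}$ is of the form $\{n-r+1, n-r+2, \ldots, n\}$. This would follow from the implication: if $k \in I$ and $k < n$, then $k+1 \in I$. The idea is that if $v \in F_k$ has non-zero projection $v_k \in N_k \subset L_k$, then by $\phi$-invariance $\phi(v) \in F_{k+1} \otimes K$, and by the definition of $\phi$ the $(k+1)$-component of $\phi(v)$ is $\psi \cdot v_k$, which is non-zero since $\psi$ is a non-zero section of $KL^{-1}$. This forces $N_{k+1} \neq 0$. Combined with $|I| = r$, this pins down $I = \{n-r+1,\ldots,n\}$.

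Once this is established, the degree estimate is routine:
\begin{equation*}
\deg F \;=\; \sum_{k=n-r+1}^{n} \deg N_k \;\le\; \sum_{k=n-r+1}^{n} \deg L_k \;=\; \sum_{k=n-r+1}^{n}\!\left(\tfrac{n+1}{2} - k\right) \;=\; -\tfrac{r(n-r)}{2} \;<\; 0,
\end{equation*}
since $1 \le r \le n-1$. This gives $\mu(F) < 0 = \mu(E)$, so $F$ cannot destabilize $E$ and stability follows. The main obstacle is really just the filtration-consecutivity step; the rest is bookkeeping. The construction of $\psi$ is justified at the start of the section via geometric Riemann--Roch ($h^0(KL^{-1}) = g-2 \ge 2$ for $g \ge 4$), so a non-zero $\psi$ exists.
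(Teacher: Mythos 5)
Your proof is correct, and it reaches the conclusion by a somewhat different route than the paper. The paper argues bottom-up: using nilpotency of $\phi$, it takes the largest $k$ with $\phi^k F\neq 0$, notes $\phi^k F\subseteq(\ker\phi)\otimes K^k=L_n\otimes K^k$ and $\phi^kF\subseteq F\otimes K^k$, deduces $L_n\subseteq F$, and then inducts on $E/L_n$ to conclude that $F$ is \emph{exactly} a coordinate subbundle $\bigoplus_{j\geq i}L_j$, whose degree is visibly negative once $i\geq 2$. Your argument is top-down: you filter by $F^k=\bigoplus_{j\geq k}L_j$ and show that the support of the associated graded of $F$ propagates downward ($N_k\neq 0\Rightarrow N_{k+1}\neq 0$, because multiplication by the non-zero section $\psi$ is generically injective), so you never identify $F$ itself, only bound $\deg F\leq\sum_{k=n-r+1}^{n}\deg L_k=-r(n-r)/2$ --- which is all that stability requires. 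Both proofs hinge on the same mechanism, namely that each arrow $L_i\to L_{i+1}K$ is non-zero; the paper's version yields the stronger structural statement that every $\phi$-invariant subbundle is a tail direct summand, while yours is the more standard graded-pieces slope estimate and gives the clean closed form for the degree bound. Your bookkeeping (local freeness of the $F_k$, additivity of degree across the filtration, $|I|=r$, and the final sum) all checks out, as do the determinant and trace verifications.
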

   \begin{proof}
    We first observe that $\bigwedge^n E=L_1L_2...L_n\cong \mathcal{O}_S$, and that $\phi$ is subdiagonal in the basis defined by $L_1, L_2,...,L_n$, and thus traceless. Hence $(E,\phi)$ is an $\mathrm{SL}(n,\mathbb{C})$-Higgs bundle. 
    \par To show stability, suppose that $F\leq E$ is a non-zero $\phi$-invariant subbundle. Since $\phi$ is nilpotent, there exists a maximal $k\geq 0$ such that $\phi^k F\neq 0$. Then $\phi^{k}F\leq (\ker \phi)K^k=L_nK^k$. Thus $\phi^kF\leq K^kF$ has non-zero intersection with $L_nK^k$, and hence $L_n\leq F$. Repeating the argument with $E/L_n$ in place of $E$, by induction it follows that for some $i$, we have 
    \begin{align*}
        F=\bigoplus_{j\geq i}L_j.
    \end{align*} 
    But unless $F=E$, the degree of $F$ is negative, and hence so is the slope. Since the degree of $E$ is zero, $E$ is stable. 
   \end{proof}
   \par Let $\mu\in\Omega^{-1,1}(S)$ be an arbitrary Beltrami form, and assume that $\mu\psi=\bar{\partial}\xi$, for some $\xi\in\Gamma(L^{-1})$. Then we can define $\theta\in \Gamma(\mathrm{End}(E))$ by letting $\theta:L_i\to L_{i+1}$ act by multiplication by $\xi$. Such a $\theta$ commutes with $\phi$, and has $\bar{\partial}\theta=\mu\phi$. 
   \par Therefore if $\mu\psi$ is $\bar{\partial}$-exact, $\mu\in K_\rho(S)$. By Serre duality, $\mu\psi$ is $\bar{\partial}$-exact if and only if 
   \begin{align}\label{eq:belt-condition-ex-construction}
    \int_S \mu \psi\wedge\eta=0\text{ for all }\eta\in H^0(KL).
   \end{align}
   Since $T_S\Teich_g$ is dual to the space of holomorphic quadratic differentials on $S$, the codimension of the space of equivalence classes of Beltrami forms $[\mu]\in T_S\Teich_g$ for which (\ref{eq:belt-condition-ex-construction}) holds is 
   \begin{align*}
    \dim\{\psi\eta:\eta\in H^0(KL)\}=h^0(KL).
   \end{align*}
   Note that $h^0(KL)$ is the space of meromorphic 1-forms on $S$ with at most a single pole at $p\in S$. But by Stokes' theorem, the residues of the poles of a meromorphic 1-form must sum to zero. Thus forms with a single pole are in fact holomorphic, so $h^0(KL)=h^0(K)=g$. Thus the space of Beltrami forms for which (\ref{eq:belt-condition-ex-construction}) holds is precisely $3g-3-h^0(KL)=2g-3$. This concludes the case of odd $n$.
   \subsubsection{The even case} We now explain how to modify the above construction in the case of even $n$. We again pick a generic point $p\in S$, set $L=\mathcal{O}(p)$, and let $\psi$ be a non-zero section of $H^0(KL^{-1})$. Set $n=2m$, and 
   \begin{gather*}
    L_1=L^{m-1}, L_2=L^{m-2}, ..., L_m=\mathcal{O}_S,\\
     L_{m+1}=\mathcal{O}_S, L_{m+2}=L^{-1},...,L_{2m}=L^{-(m-1)}.
   \end{gather*}
   Let $\omega$ be an arbitrary holomorphic 1-form. Let $E=\bigoplus_{i=1}^{2m} L_i$, and define $\phi\in H^0(K\mathrm{End}(E))$ by 
   \begin{gather*}
    \phi:L_i\to L_{i+1}K\text{ is multiplication by }\psi\text{ unless }i=m, \\
    \phi:L_m\cong\mathcal{O}_S\to K\cong L_{m+1}K\text{ is multiplication by }\omega.
   \end{gather*}  
   The proof of Claim \ref{claim:stable} applies verbatim to show that $(E, \phi)$ is a stable $\mathrm{SL}(n,\mathbb{C})$-Higgs bundle. 
   \par An analogous argument to the one that led to (\ref{eq:belt-condition-ex-construction}), shows that $\Delta_\mu\mathrm{E}_\rho=0$ if both $\mu\psi$ and $\mu\omega$ are $\bar{\partial}$-exact. Again by Serre duality, this is equivalent to 
   \begin{align*}
    \int_S \mu\Phi=0\text{ for all }\Phi\in \psi H^0(KL)+\omega H^0(K).
   \end{align*}
   The same argument shows that $h^0(KL)=h^0(K)=g$, and hence \[ \dim K_\rho(S)\geq 3g-3-h^0(KL)-h^0(K)=g-3.\]
       
\bibliographystyle{amsplain}
    \bibliography{main}
\end{document}